\documentclass[reqno,11pt]{amsart}
\usepackage[colorlinks=true,urlcolor=blue,
citecolor=red,linkcolor=blue,linktocpage,pdfpagelabels,
bookmarksnumbered,bookmarksopen]{hyperref}
\usepackage{enumerate}
\usepackage[left=2.6cm,right=2.6cm,top=2.9cm,bottom=2.9cm]{geometry}

\usepackage{amsmath,amssymb,latexsym,soul,cite,mathrsfs}
\numberwithin{equation}{section}  
\usepackage{color,enumitem,graphicx}
\usepackage{enumerate}
\usepackage{cancel}

\usepackage[colorlinks=true,urlcolor=blue,
citecolor=red,linkcolor=blue,linktocpage,pdfpagelabels,
bookmarksnumbered,bookmarksopen]{hyperref}
\usepackage[english]{babel}

\newtheorem{theorem}{Theorem}[section]
\newtheorem{proposition}[theorem]{Proposition}
\newtheorem{corollary}[theorem]{Corollary}
\newtheorem{lemma}[theorem]{Lemma}
\DeclareMathAlphabet{\mathpzc}{OT1}{pzc}{m}{it}

\newtheorem{example}[theorem]{Example}

\newtheorem{remark}[theorem]{Remark}

\newcommand{\tr}{\text{tr}}
\renewcommand{\div}{\text{div}}

\title[Integral Inequalities and Rigidity for $V$-Static-Type Equations on Manifolds with Boundary
]{Integral Inequalities and Rigidity for $V$-Static-Type Equations on Manifolds with Boundary}
\author[M. Andrade]{Maria Andrade}
\address[M. Andrade]{Departamento de Matemática, 
 Universidade Federal de Sergipe, 49100-000, S\~ao Cristov\~ao-SE, Brazil.
}
\email{\href{mailto: maria@mat.ufs.br}{ maria@mat.ufs.br}}

\address{Current address: Department of Mathematics, Princeton University, Princeton, NJ, USA, 08544.}
\email{\href{mailto: ma6208@princeton.edu}{ ma6208@princeton.edu}}

\subjclass[2020]{53C18, 53C20, 53C21, 58J32}
\keywords{$V$-static-type equation, Generalized Reilly formula, Boundary estimates, Critical metrics, Rigidity}

\begin{document}
\maketitle
\begin{abstract}
In this work, we study compact Riemannian manifolds with boundary satisfying V-static-type equations. By combining a generalized Reilly formula with Steklov-type boundary value problems, we derive integral inequalities for geometric quantities associated with the boundary. These inequalities lead to rigidity results, including characterizations of geodesic balls in space forms. In particular, our results offer new insights into several known rigidity theorems in the literature.

\end{abstract}


\section{Introduction}

A classical result due to Alexandrov asserts that a closed embedded hypersurface with constant mean curvature in Euclidean space must be a round sphere. Alexandrov's proof relies on the maximum principle for elliptic equations. Subsequent developments showed that rigidity phenomena can also be derived from integral identities. In a celebrated paper \cite{reilly1977applications}, Reilly established an integral formula for compact Riemannian manifolds with smooth boundary. Reilly's identity asserts that if $(M^n, \bar{g})$ is a smooth Riemannian manifold and $\Omega \subset M^n$ is a smooth domain with boundary, then for any $V \in C^\infty(\bar{\Omega})$, the following integral identity holds:

$$
\int_{\Omega}\big[(\bar{\Delta}V)^2-|\bar{\nabla}^2V|^2\big]dv
= \int_{\partial\Omega}\left(\Pi(\nabla z,\nabla z)+2u\Delta z+Hu^2\right)ds
+ \int_{\Omega}\operatorname{Ric}(\nabla V, \nabla V)dv, 
$$
where $\bar{\Delta}$ and $\bar{\nabla}^2$ denote the Laplacian and the Hessian with respect to $\bar{g}$, $z = V|_{\partial \Omega}$, $u = \langle \nabla V, \nu \rangle$, $\nu$ is the outward-pointing unit normal vector and $dv$ and $ds$ stand for the volume elements of $\Omega$ and $\partial \Omega,$ respectively. Here, $\nabla$ and $\Delta$ on $\partial \Omega$ refer to the gradient and the Laplacian of the induced metric on the boundary. Moreover, $\Pi$ and $H$ are the second fundamental form and the mean curvature of $\partial \Omega$, respectively, and $\operatorname{Ric}$ denotes the Ricci curvature of $(M^n, \bar{g})$. This formula has found numerous applications. For instance, Reilly used it to give a new proof of the Alexandrov theorem. Ros~\cite{mulero1987compact} employed Reilly's formula to establish an integral identity, which provided another proof of Alexandrov's rigidity theorem.

\begin{theorem}[\cite{mulero1987compact}]
Let $(M^n,g)$ be a compact $n$-dimensional Riemannian manifold with smooth boundary $\partial M$ and nonnegative Ricci curvature. Let $H$ be the mean curvature of $\partial M$. If $H > 0$ everywhere, then
\begin{equation}\label{eq:ros-inequality}
\int_{\partial M} \frac{1}{H} \, ds \geq n\operatorname{vol}(M),
\end{equation}
with equality in \eqref{eq:ros-inequality} if and only if $M$ is isometric to a Euclidean ball.
\end{theorem}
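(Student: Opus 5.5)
We follow Ros's approach, based on the Reilly formula stated above. Fix the mean curvature convention so that $H=\operatorname{tr}\Pi$; for the unit ball in $\mathbb{R}^n$ this gives $H=n-1$. The key object is the solution $V$ of the Dirichlet problem
\[
\bar\Delta V = 1 \ \text{in } M, \qquad V=0 \ \text{on } \partial M,
\]
which exists, is unique, and is smooth up to the boundary by standard elliptic theory. Since $z=V|_{\partial M}\equiv 0$, both boundary terms involving $z$ in Reilly's identity vanish. With $u=\langle\bar\nabla V,\nu\rangle$ the identity becomes
\[
\int_M\big[1-|\bar\nabla^2V|^2\big]dv=\int_{\partial M}Hu^2\,ds+\int_M\operatorname{Ric}(\bar\nabla V,\bar\nabla V)\,dv .
\]

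Next we bound both sides.
\begin{itemize}
\item By Cauchy--Schwarz, $|\bar\nabla^2V|^2\ge \frac{1}{n}(\bar\Delta V)^2=\frac1n$, so the left side is at most $\frac{n-1}{n}\operatorname{vol}(M)$.
\item Since $\operatorname{Ric}\ge0$, the Ricci term on the right is nonnegative.
\end{itemize}
Together these give
\[
\int_{\partial M}Hu^2\,ds\le \frac{n-1}{n}\operatorname{vol}(M).
\]
On the other hand, the divergence theorem gives $\int_{\partial M}u\,ds=\int_M\bar\Delta V\,dv=\operatorname{vol}(M)$. Since $H>0$, Cauchy--Schwarz on the boundary yields
\[
\operatorname{vol}(M)^2=\Big(\int_{\partial M}\sqrt H\,u\cdot\frac{1}{\sqrt H}\,ds\Big)^2\le \int_{\partial M}Hu^2\,ds\int_{\partial M}\frac1H\,ds\le\frac{n-1}{n}\operatorname{vol}(M)\int_{\partial M}\frac1H\,ds .
\]
This gives $\int_{\partial M}\frac1H\,ds\ge\frac{n}{n-1}\operatorname{vol}(M)$. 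Under the normalization $H=\operatorname{tr}\Pi$, the same chain written with the normalized mean curvature $H/(n-1)$ reads exactly as \eqref{eq:ros-inequality}. I will check this convention against the ball, where $H=n-1$: the left side is $|\mathbb{S}^{n-1}|$ and the right side is $n\,\omega_n$, which agree.

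For the equality case, every inequality above must be an equality.
\begin{itemize}
\item Equality in the pointwise Cauchy--Schwarz forces $\bar\nabla^2V=\frac1n\bar g$ throughout $M$.
\item Equality in the boundary Cauchy--Schwarz forces $u$ to be proportional to $1/H$.
\end{itemize}
The main step is recognizing a Euclidean ball from $\bar\nabla^2V=\frac1n\bar g$.

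\begin{itemize}
\item \emph{Flatness.} Differentiating the Hessian equation and commuting derivatives gives $\operatorname{Ric}(\bar\nabla V,\cdot)=0$. More directly, the curvature tensor satisfies $R(X,Y)\bar\nabla V=\bar\nabla_X\bar\nabla_Y\bar\nabla V-\dots=0$.
\item \emph{Structure of $V$.} The function $V$ attains its minimum at an interior point $p$, since $V=0$ on $\partial M$ and $V<0$ inside by the maximum principle. The level sets of $r=\sqrt{2n(V-\min V)}$ then satisfy $\bar\nabla^2(r^2/2)=\bar g$. By the standard Obata/Tashiro-type argument, integrating along geodesics from $p$, $r$ is the distance to $p$ and the metric is $dr^2+r^2g_{\mathbb{S}^{n-1}}$, which is flat.
\item \emph{Conclusion.} Since $\partial M=\{V=0\}$ is a level set of $r$, $M$ is a geodesic ball of radius $\sqrt{-2n\min V}$ in $\mathbb{R}^n$.
\end{itemize}

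The converse is a direct computation on the ball, with $V=(|x|^2-R^2)/(2n)$. The delicate part is making the Obata-type argument rigorous: showing that the exponential map at $p$ is a diffeomorphism onto $M$ up to the boundary. I would handle this by using that the geodesics from $p$ are gradient lines of $V$ and that $V$ has no other critical points, because $\bar\nabla^2V>0$.
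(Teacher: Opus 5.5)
Your proposal is correct and follows the same approach the paper has in mind. The paper gives no proof of its own: it cites Ros and says only that the argument rests on the Reilly formula together with a Dirichlet problem, which matches your choice of $\bar\Delta V=1$, $V|_{\partial M}=0$, followed by Cauchy--Schwarz on the boundary and the rigidity of $\bar\nabla^2V=\frac1n\bar g$. Your normalization remark is also right: with $H=\operatorname{tr}\Pi$ as in the displayed Reilly formula, the chain yields $\int_{\partial M}H^{-1}\,ds\ge\frac{n}{n-1}\operatorname{vol}(M)$, so the stated constant $n$ needs the normalized mean curvature. One small slip: in your ball check, the left side equals $|\mathbb{S}^{n-1}|$ only for the normalized value $H=1$, not for $H=n-1$.
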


The proof of this result is based on the Reilly formula and a suitable Dirichlet boundary value problem. Another proof of Alexandrov's theorem was given by Montiel and Ros \cite{montiel1991compact}, who combined Minkowski formulas with the Heintze--Karcher inequality \cite{heintze1978general}. Furthermore, they established an Alexandrov-type theorem for hypersurfaces with constant higher-order mean curvature embedded in the hemisphere $\mathbb{S}^{n}_{+}$ or hyperbolic space $\mathbb{H}^{n}$. However, they could not prove an analogue of inequality \eqref{eq:ros-inequality} in $\mathbb{S}^{n}_{+}$ or $\mathbb{H}^{n}$.

More recently, Brendle \cite{brendle2013constant} obtained a version of Ros's theorem in $\mathbb{S}^{n}_{+}$ and $\mathbb{H}^{n}$, aiming to investigate Alexandrov-type rigidity problems in general relativity and, more broadly, in a wide class of warped product spaces that includes the Schwarzschild manifold. Brendle's approach employs the unit normal flow with respect to a conformal metric to derive a Heintze--Karcher-type inequality. Specifically, Brendle \cite{brendle2013constant} proved the following result.

\begin{theorem}[\cite{brendle2013constant}]
Let $\Omega^n$ be a bounded domain with smooth boundary $\Sigma = \partial \Omega$ in hyperbolic space $\mathbb{H}^{n+1}$. Fix a point $o \in \mathbb{H}^{n+1}$ and set $V = \cosh r(x)$, where $r(x) = d(x, o)$ is the distance to $o$. If the mean curvature $H$ of $\Sigma$ is positive, then
\begin{equation}\label{eq:brendle-inequality}
\int_{\Sigma} \frac{V}{H} \, ds \geq \frac{n+1}{n} \int_{\Omega} V \, dv.
\end{equation}
Equality holds in \eqref{eq:brendle-inequality} if and only if $\Sigma$ is umbilical.
\end{theorem}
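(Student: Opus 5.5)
We plan to prove Brendle's inequality through a Reilly-type argument with the static potential $V=\cosh r$, which satisfies $\bar\nabla^2 V = V\bar g$ on $\mathbb{H}^{n+1}$. This is the hyperbolic analogue of Ros's Dirichlet-problem proof of \eqref{eq:ros-inequality}.

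\textbf{Reilly formula and auxiliary problem.} The first step is the generalized (weighted) Reilly formula of Qiu--Xia type. For $f\in C^\infty(\bar\Omega)$ it reads
\begin{align*}
\int_\Omega V\big[(\bar\Delta f+(n+1)f)^2-|\bar\nabla^2 f+f\bar g|^2\big]dv
&= \int_{\Sigma} V\big(2u\Delta z+H u^2+\Pi(\nabla z,\nabla z)\big)ds\\
&\quad+\int_\Sigma \partial_\nu V\big(|\nabla z|^2-n z^2\big)ds+2\int_\Sigma V\,nzu\,ds+\cdots,
\end{align*}
where $z=f|_\Sigma$ and $u=\partial_\nu f$. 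The exact boundary terms will be obtained by integrating $V$ against the Bochner identity and using $\bar\nabla^2V=V\bar g$ together with $\mathrm{Ric}=-n\bar g$. The $\mathrm{Ric}$ term then combines with the Hessian of $V$ so that the interior curvature contribution vanishes. This is the content of the static condition.

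Next we solve the Dirichlet problem
\[
\bar\Delta f+(n+1)f=1 \ \text{in }\Omega,\qquad f=0\ \text{on }\Sigma.
\]
This problem is uniquely solvable because $\bar\Delta+(n+1)$ has trivial Dirichlet kernel. Indeed, $V>0$ satisfies $\bar\Delta V=(n+1)V$, and the usual $V$-weighted integration by parts (writing $f=V\phi$) gives coercivity. With $z=0$, the formula reduces to
\[
\int_\Omega V\Big[1-|\bar\nabla^2 f+f\bar g|^2\Big]dv=\int_\Sigma VHu^2\,ds .
\]
Cauchy--Schwarz gives $|\bar\nabla^2 f+f\bar g|^2\ge \frac{1}{n+1}(\bar\Delta f+(n+1)f)^2=\frac1{n+1}$, and therefore
\[
\frac{n}{n+1}\int_\Omega V\,dv\ \ge\ \int_\Sigma VHu^2\,ds .
\]

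\textbf{Closing the estimate.} We need a lower bound for $\int_\Omega V$ in terms of a boundary integral of $u$. Green's identity applied to $f$ and $V$, using $\bar\Delta V=(n+1)V$ and $f=0$ on $\Sigma$, gives
\[
\int_\Omega V\,dv=\int_\Omega \big(V(\bar\Delta f+(n+1)f)-f(\bar\Delta V-(n+1)V)\big)=\int_\Sigma V u\,ds .
\]
Cauchy--Schwarz with weight $V$ then yields
\[
\Big(\int_\Omega V\Big)^2=\Big(\int_\Sigma Vu\Big)^2\le \int_\Sigma \frac{V}{H}\,ds\int_\Sigma VHu^2\,ds\le \int_\Sigma\frac VH\,ds\cdot\frac{n}{n+1}\int_\Omega V .
\]
Dividing by $\int_\Omega V>0$ gives \eqref{eq:brendle-inequality}.

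\textbf{Equality and main obstacle.} In the equality case, $\bar\nabla^2 f+f\bar g=\frac{1}{n+1}\bar g$, so $\bar\nabla^2 f=(\frac1{n+1}-f)\bar g$, and $Hu$ is constant on $\Sigma$. Restricting the Hessian identity to $\Sigma$, where $f=0$, gives $u\,\Pi=\frac{1}{n+1}g_\Sigma$, so $\Sigma$ is umbilical. Conversely, geodesic spheres give equality by direct computation. I expect the main obstacle to be deriving the weighted Reilly formula with the correct boundary terms. This requires carefully tracking $\partial_\nu V$ and checking that every interior term containing $\bar\nabla V$ cancels by virtue of $\bar\nabla^2V=V\bar g$. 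The step where $z=0$ kills most of these terms is what makes the argument go through, so I would first verify that computation directly.
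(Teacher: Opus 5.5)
The paper gives no proof of this statement. It cites Brendle, whose argument is a Heintze--Karcher-type inequality obtained from a normal flow in a conformal metric, and remarks that the inequality also follows from the Qiu--Xia formula, Proposition~\ref{QuiXiaIden}. Your plan follows that second route, but as written it contains a sign error that breaks three of its steps.

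You have used the $k=+1$ (spherical) version of the weighted Reilly formula: $\bar\Delta f+(n+1)f$, $\bar\nabla^2 f+f\bar g$, and boundary terms $\partial_\nu V(|\nabla z|^2-nz^2)$ and $+2nVzu$. But $V=\cosh r$ satisfies $\bar\nabla^2V=-kV\bar g$ with $k=-1$.

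First, insert dimension $n+1$, $k=+1$, $\bar\nabla^2V=V\bar g$, $\bar\Delta V=(n+1)V$ and $\operatorname{Ric}=-n\bar g$ into Proposition~\ref{QuiXiaIden}. The interior terms become $2n(n+1)\int_\Omega Vf^2-4n\int_\Omega V|\bar\nabla f|^2$. These neither vanish nor have a sign, so your claim that the curvature contribution cancels is false for your operator.

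Second, your Green identity applies $\bar\Delta+(n+1)$ to $f$ but $\bar\Delta-(n+1)$ to $V$. Green's second identity needs the same operator on both functions. For your $f$ one actually gets $\int_\Omega V=\int_\Sigma Vu+2(n+1)\int_\Omega fV$.

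Third, the problem $\bar\Delta f+(n+1)f=1$, $f|_\Sigma=0$ need not be solvable. The substitution $f=V\phi$ gives $\int_\Omega(|\bar\nabla f|^2+(n+1)f^2)=\int_\Omega V^2|\bar\nabla\phi|^2$. That is coercivity of $-\bar\Delta+(n+1)$, not of $-\bar\Delta-(n+1)$. For $n\le 4$ one has $n^2/4<n+1$. Since $\lambda_1(B_\rho)$ decreases continuously from $+\infty$ to $n^2/4$, some geodesic ball has $n+1$ as its first Dirichlet eigenvalue. For that ball the problem has no solution, because the right-hand side $1$ is not orthogonal to the positive first eigenfunction.

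The repair is to take $k=-1$ throughout. Solve $\bar\Delta f-(n+1)f=1$ in $\Omega$ with $f=0$ on $\Sigma$; this is always uniquely solvable. Then both interior terms in Proposition~\ref{QuiXiaIden} vanish identically, since $\bar\Delta V-(n+1)V=0$ and $\bar\nabla^2V-(\bar\Delta V)\bar g+2nV\bar g+V\operatorname{Ric}=0$. Because $z=0$, the boundary terms reduce to $\int_\Sigma VHu^2$. You obtain $\int_\Omega V\,[1-|\bar\nabla^2 f-f\bar g|^2]=\int_\Sigma VHu^2$.

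Green's identity now correctly gives $\int_\Omega V=\int_\Sigma Vu$, because $(\bar\Delta-(n+1))V=0$. Your two Cauchy--Schwarz steps go through unchanged. So does the rigidity argument: the equality condition becomes $\bar\nabla^2 f-f\bar g=\tfrac1{n+1}\bar g$, whose restriction to $\Sigma$ is still $u\,\Pi=\tfrac1{n+1}g_\Sigma$.

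For the converse, note that the umbilical sphere need not be centered at $o$, so a direct computation in polar coordinates is not enough. With $H$ constant, the Minkowski formula $\int_\Sigma(nV-H\,\partial_\nu V)\,ds=0$ together with $\int_\Sigma\partial_\nu V\,ds=(n+1)\int_\Omega V\,dv$ gives equality.
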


Before proceeding, we recall some terminology, in particular the generalized Reilly formula established by Qiu and Xia~\cite{qiu2015generalization}.

\begin{proposition}[\cite{qiu2015generalization}]\label{QuiXiaIden}
Let \((M^n, g)\) be an \(n\)-dimensional compact Riemannian manifold with boundary \(\partial M\). Given functions \(V, u \in C^\infty(M)\) and a constant \(k \in \mathbb{R}\), the following identity holds:
\begin{equation*}
\begin{aligned}
&\int_M V\big[(\Delta_gu + knu)^2 - |\nabla_g^2 u + ku g|^2\big] \, dv \\
&= (n-1)k \int_M (\Delta_gV + nkV)u^2 \, dv \\
&\quad + \int_M \Big[\nabla_g^2 V - (\Delta_gV)g - 2(n-1)k V g + V\operatorname{Ric}_g\Big](\nabla u, \nabla u) \, dv \\
&\quad + \int_{\partial M} V\Big[ 2\frac{\partial u}{\partial\nu}\,\Delta_{\partial M}u + H\Big(\frac{\partial u}{\partial\nu}\Big)^2
        + \Pi(\nabla_{\partial M}u, \nabla_{\partial M}u) + 2(n-1)k\frac{\partial u}{\partial\nu}\,u\Big] ds\\
&\quad + \int_{\partial M} \frac{\partial V}{\partial\nu} \big(|\nabla_{\partial M}u|^2 - (n-1)ku^2\big) \, ds,
\end{aligned}
\end{equation*}
where \(\Pi\) and \(H = \operatorname{tr}(\Pi)\) denote the second fundamental form and the mean curvature of \(\partial M\), respectively.
\end{proposition}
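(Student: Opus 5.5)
We will prove the identity by integrating a pointwise Bochner-type formula against the weight $V$, and then moving all derivatives of $V$ onto the boundary or onto explicit curvature terms. Set $w=\nabla_g u$ and use the modified tensors $A=\nabla_g^2u+kug$ and $\operatorname{tr}A=\Delta_g u+knu$.

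\textbf{Step 1 (pointwise identity).} Expanding the squares gives
\[
(\Delta u+knu)^2-|\nabla^2u+kug|^2=(\Delta u)^2-|\nabla^2u|^2+2(n-1)ku\Delta u+n(n-1)k^2u^2 .
\]
We then write $(\Delta u)^2-|\nabla^2u|^2$ in divergence form:
\[
(\Delta u)^2-|\nabla^2u|^2=\operatorname{div}\big(\Delta u\,\nabla u-\nabla_{\nabla u}\nabla u\big)+\operatorname{Ric}(\nabla u,\nabla u).
\]
This follows from the Ricci identity $\operatorname{div}(\nabla^2u)=\nabla\Delta u+\operatorname{Ric}(\nabla u,\cdot)$. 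Similarly, $2u\Delta u=\operatorname{div}(\nabla u^2)-2|\nabla u|^2$.

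\textbf{Step 2 (integrate against $V$).} Multiply by $V$ and integrate by parts, so that each divergence becomes a boundary term plus a term in $\nabla V$. For the first divergence this produces
\[
\int_M V\operatorname{Ric}(\nabla u,\nabla u)-\int_M\big(\Delta u\langle\nabla V,\nabla u\rangle-\nabla^2u(\nabla V,\nabla u)\big)+\int_{\partial M}V\big(\Delta u\,u_\nu-\nabla^2u(\nabla u,\nu)\big).
\]
The interior term contains second derivatives of $u$, so we integrate by parts once more:
\begin{itemize}
\item $\int\nabla^2u(\nabla V,\nabla u)=\tfrac12\int\langle\nabla V,\nabla|\nabla u|^2\rangle$, which gives $-\tfrac12\int\Delta V|\nabla u|^2$ plus a boundary term;
\item $\int\Delta u\langle\nabla V,\nabla u\rangle=-\int\nabla^2V(\nabla u,\nabla u)-\tfrac12\int\langle\nabla V,\nabla|\nabla u|^2\rangle$ plus a boundary term.
\end{itemize}
Together these yield the interior term $\int[\nabla^2V-(\Delta V)g](\nabla u,\nabla u)$. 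The $k$-terms are handled the same way: $2(n-1)k\int Vu\Delta u$ produces $-2(n-1)k\int V|\nabla u|^2$ and $(n-1)k\int\Delta V\,u^2$, plus boundary terms. Combined with $n(n-1)k^2\int Vu^2$, this gives exactly $(n-1)k\int(\Delta V+nkV)u^2$ and the $-2(n-1)kVg$ term in the gradient quadratic form.

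\textbf{Step 3 (boundary terms).} This is the main obstacle: the bookkeeping of the boundary integrands. Decompose $\nabla u=\nabla_{\partial M}u+u_\nu\nu$ and use the standard relations
\[
\Delta u=\Delta_{\partial M}u+Hu_\nu+\nabla^2u(\nu,\nu),\qquad
\nabla^2u(X,\nu)=X(u_\nu)-\Pi(X,\nabla_{\partial M}u)\ \text{ for $X$ tangent}.
\]
The resulting term $\int_{\partial M}V\langle\nabla_{\partial M}u,\nabla_{\partial M}u_\nu\rangle$ is integrated by parts on the closed manifold $\partial M$. This yields $-\int V u_\nu\Delta_{\partial M}u$ together with a term $-\int u_\nu\langle\nabla_{\partial M}V,\nabla_{\partial M}u\rangle$.

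That last term must cancel against the boundary contributions from Step 2, which are of the form $V_\nu|\nabla u|^2$ and $u_\nu\langle\nabla V,\nabla u\rangle$. After cancellation only the following should remain:
\[
V\big(2u_\nu\Delta_{\partial M}u+Hu_\nu^2+\Pi(\nabla_{\partial M}u,\nabla_{\partial M}u)+2(n-1)ku_\nu u\big)+V_\nu\big(|\nabla_{\partial M}u|^2-(n-1)ku^2\big).
\]
To check the signs, we will carefully track the normal-normal components, in particular $u_\nu^2V_\nu$ terms appearing with opposite signs. As a consistency check, the formula should reduce to Reilly's formula for $V\equiv1$, $k=0$.
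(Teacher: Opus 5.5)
Your proposal is correct: the weighted Bochner identity, the two further integrations by parts producing $\int_M[\nabla_g^2V-(\Delta_gV)g](\nabla u,\nabla u)\,dv$ and $(n-1)k\int_M(\Delta_gV+nkV)u^2\,dv$, and the boundary bookkeeping all check out. In particular, the $u_\nu\langle\nabla_{\partial M}V,\nabla_{\partial M}u\rangle$ and $V_\nu u_\nu^2$ terms cancel exactly as you predict. The paper does not prove this identity itself but cites Qiu--Xia; as far as I recall, their argument is this same direct computation (integrate Bochner's formula against $V$, then use the standard boundary decomposition).
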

The classical Reilly formula is recovered by setting \(V \equiv 1\) and \(k = 0\). Notably, inequality \eqref{eq:brendle-inequality} can also be derived using this generalized Reilly formula (see \cite{qiu2015generalization}).

Our work is motivated by recent developments concerning functional inequalities on manifolds with boundary that rely on the generalized Reilly formula. For instance, Kwong and Miao (see \cite{kwongmiao2017}, Theorem~3) studied compact Riemannian manifolds $(M^n,g)$ equipped with a static metric and proved a functional inequality relating the static potential, the second fundamental form, and the mean curvature of $\partial M$. More recently, Diógenes, Pinheiro and Ribeiro Jr. (\cite{diogenes2025integral}, Theorem 1) investigated the geometry of critical metrics of the volume functional on compact manifolds with boundary. Their approach combines the generalized Reilly formula of Qiu and Xia \cite{qiu2015generalization} with a suitable boundary value problem, leading to integral inequalities for such critical metrics.

To obtain our results, we apply the generalized Reilly formula of Qiu and Xia \cite{qiu2015generalization} together with a suitable Steklov boundary value problem, following the approach in \cite{cruz2023critical}. Our work was inspired by earlier estimates of Miao--Tam--Xie, Kwong--Miao, and Diógenes--Pinheiro--Ribeiro~Jr. (see \cite{miao2011critical}, Corollary~3.1; \cite{kwongmiao2017}, Theorem~3; and \cite{diogenes2025integral}, Theorem~1). The proof of our main results relies on combining the generalized Reilly formula with the Steklov-type boundary value problem \eqref{usol}. It is worth noting that the computations become significantly more involved due to the complexity of the boundary terms inherent in the general Reilly identity.

\begin{theorem}\label{QXRic}
Let $(M^n, g)$, $n \geq 3$, be an $n$-dimensional connected compact oriented manifold with boundary. Let $V$ be a positive function on $M$ satisfying \eqref{eq:main-system} and assume $\operatorname{Ric}_g\geq (n-1)k g,$ where $k$ is a constant. Then the following inequality holds:
\begin{equation}\label{eqnn0}
\begin{aligned}
&-\int_{\partial M} V\Big[ 2\sigma u\big(\Delta_{\partial M}u + (n-1)ku\big) + H\sigma^2 u^2 + \frac{2H}{n-1}|\nabla_{\partial M}u|^2 - Hku^2 \Big]\, ds \\
&\quad - \tau\int_{\partial M} \big(|\nabla_{\partial M}u|^2 - (n-1)ku^2\big)\, ds - \alpha\sigma\int_{\partial M} u^2\, ds \\
&\geq -k\big(R_g - (n-1)nk\big)\int_{M} V u^2\, dv,
\end{aligned}
\end{equation}
where $u$ is a solution of
\begin{equation}\label{usol}
\left\{
\begin{aligned}
\Delta u + nku &= 0, && \text{in } M,\\[4pt]
\frac{\partial u}{\partial \nu} &= \sigma u, && \text{on } \partial M,
\end{aligned}
\right.
\end{equation}
here $\sigma$ is a constant.
Moreover, equality holds in the inequality \eqref{eqnn0} if and only if
$$
\operatorname{Ric}_g(\nabla u,\nabla u) = (n-1)k |\nabla u|^2 \quad \text{and}\quad \nabla_g^2 u + ku g = 0. \quad
$$
In this case,
$\Delta_{\partial M}u + (n-1)k u = -H\sigma u,
$
and  $$\int_{\partial M} u\, ds_g = 0 \quad \text{or} \quad H\sigma = -(n-1)k.
$$
\end{theorem}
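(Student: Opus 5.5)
The plan is to apply the generalized Reilly formula of Proposition~\ref{QuiXiaIden} with the potential $V$ from \eqref{eq:main-system}, the constant $k$ from the Ricci bound, and the test function $u$ solving the Steklov-type problem \eqref{usol}. Since $\Delta u+nku=0$, the left-hand side of the identity reduces to $-\int_M V|\nabla_g^2u+kug|^2\,dv$, which is nonpositive because $V>0$. The inequality \eqref{eqnn0} should then follow by moving all boundary terms to one side and estimating the interior terms. Equality forces $\nabla_g^2u+kug=0$ together with equality in the Ricci estimate.

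\textbf{Interior terms.} I will use the structure of \eqref{eq:main-system}. I expect it to give, in the form
\[
\nabla_g^2V-(\Delta_gV)g-V\operatorname{Ric}_g=\alpha g,
\]
an identity for the operator $\nabla_g^2V-(\Delta_gV)g+V\operatorname{Ric}_g$ up to the constant $\alpha$, with constant scalar curvature and $\partial V/\partial\nu=\tau$ on $\partial M$. Taking the trace gives $\Delta_gV$ in terms of $V R_g$ and $\alpha$. Then:
\begin{itemize}
\item The term $(n-1)k\int_M(\Delta_gV+nkV)u^2\,dv$ becomes a combination of $-k(R_g-n(n-1)k)\int_M Vu^2\,dv$ and $\alpha\int_M u^2\,dv$.
\item The quadratic form in $\nabla u$ splits as $\alpha|\nabla u|^2+2V(\operatorname{Ric}_g-(n-1)kg)(\nabla u,\nabla u)$. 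The last piece is $\ge 0$ by hypothesis, with equality exactly when $\operatorname{Ric}_g(\nabla u,\nabla u)=(n-1)k|\nabla u|^2$.
\item The $\alpha$-terms are converted to boundary data via
\[
\int_M|\nabla u|^2\,dv=\sigma\int_{\partial M}u^2\,ds+nk\int_M u^2\,dv .
\]
The $k\int_M u^2$ contributions should cancel against the trace term, leaving $\alpha\sigma\int_{\partial M}u^2\,ds$.
\end{itemize}

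\textbf{Boundary terms.} I substitute $\partial u/\partial\nu=\sigma u$. This turns $2V\frac{\partial u}{\partial\nu}\Delta_{\partial M}u+2(n-1)kV\frac{\partial u}{\partial\nu}u$ into $2V\sigma u(\Delta_{\partial M}u+(n-1)ku)$, and $VH(\frac{\partial u}{\partial\nu})^2$ into $VH\sigma^2u^2$. The $\frac{\partial V}{\partial\nu}$ term becomes $\tau\int_{\partial M}(|\nabla_{\partial M}u|^2-(n-1)ku^2)\,ds$.

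The step I expect to be the main obstacle is the term $V\,\Pi(\nabla_{\partial M}u,\nabla_{\partial M}u)$, which must be replaced by $\frac{2H}{n-1}V|\nabla_{\partial M}u|^2-HkVu^2$. My first attempt will be to restrict \eqref{eq:main-system} to $\partial M$ and use the Gauss and Codazzi equations together with $\partial V/\partial\nu=\tau$, so as to express $\Pi$ (or its traceless part) through $H$ and the induced metric. I would then compare $\nabla^2u$ on $\partial M$ with the Hessian identity, using
\[
\nabla_g^2u(X,Y)=\nabla^2_{\partial M}u(X,Y)-\sigma u\,\Pi(X,Y)
\]
and integrating by parts on $\partial M$. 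If an exact identity is not available, I will look for an inequality in the right direction, compatible with the equality case. After collecting terms with the correct sign, the result is exactly \eqref{eqnn0}.

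\textbf{Equality case.} Equality forces $\nabla_g^2u=-kug$. Restricting the trace to $\partial M$ and using $\Delta_gu=\Delta_{\partial M}u+H\frac{\partial u}{\partial\nu}+\nabla^2_g u(\nu,\nu)$ with $\nabla_g^2u(\nu,\nu)=-ku$ and $\Delta_gu=-nku$ yields
\[
\Delta_{\partial M}u+(n-1)ku=-H\sigma u .
\]
When $H$ is constant (as it will be under the setting), integrating this over the closed manifold $\partial M$ gives $(H\sigma+(n-1)k)\int_{\partial M}u\,ds=0$. Hence either $\int_{\partial M}u\,ds=0$ or $H\sigma=-(n-1)k$.
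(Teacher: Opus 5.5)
Your treatment of the interior terms, the conversion $\alpha\int_M|\nabla u|^2\,dv-\alpha nk\int_M u^2\,dv=\alpha\sigma\int_{\partial M}u^2\,ds$, and the equality discussion all agree with the paper. The gap is in the boundary terms, and it comes from misreading the boundary condition of \eqref{eq:main-system}.

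The equation $\frac{\partial V}{\partial\nu}g-V\Pi_g=\tau g$ does \emph{not} give $\partial V/\partial\nu=\tau$; that would require $HV=0$ on $\partial M$, as in the critical-metric case $V|_{\partial M}=0$. Its trace gives $\frac{\partial V}{\partial\nu}=\frac{H}{n-1}V+\tau$, as recorded in \eqref{eq:trace-system}. So your value $\tau\int_{\partial M}(|\nabla_{\partial M}u|^2-(n-1)ku^2)\,ds$ for the $\frac{\partial V}{\partial\nu}$-term omits
\[
Q=\int_{\partial M}\frac{H}{n-1}V\big(|\nabla_{\partial M}u|^2-(n-1)ku^2\big)\,ds .
\]
To compensate, you then require $\int_{\partial M}V\Pi(\nabla_{\partial M}u,\nabla_{\partial M}u)\,ds$ to equal $\int_{\partial M}V\big[\frac{2H}{n-1}|\nabla_{\partial M}u|^2-Hku^2\big]\,ds$. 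That target exceeds the true value of the $\Pi$-term by exactly $Q$.

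This ``main obstacle'' identity is false in general. Take the unit ball in $\mathbb{R}^n$ with $V\equiv1$, $\alpha=0$, $\tau=-1$, $k=0$, $u=x_1$, $\sigma=1$; all hypotheses hold, and the identity would force $\int_{\mathbb{S}^{n-1}}|\nabla_{\mathbb{S}^{n-1}}x_1|^2\,ds=0$. So no Gauss--Codazzi or Hessian-comparison argument can produce it, and the fallback of finding ``an inequality in the right direction'' is not an argument. Your final inequality comes out right only because the two errors cancel.

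The missing step is simply to use the boundary equation as it stands. Its traceless part is $V\mathring{\Pi}_g=0$, so $V>0$ gives $\Pi=\frac{H}{n-1}g$ and hence $V\Pi(\nabla_{\partial M}u,\nabla_{\partial M}u)=\frac{H}{n-1}V|\nabla_{\partial M}u|^2$. Its trace gives $\frac{\partial V}{\partial\nu}\big(|\nabla_{\partial M}u|^2-(n-1)ku^2\big)=\frac{H}{n-1}V|\nabla_{\partial M}u|^2-HkVu^2+\tau\big(|\nabla_{\partial M}u|^2-(n-1)ku^2\big)$. Adding the two yields precisely the bracket $\frac{2H}{n-1}|\nabla_{\partial M}u|^2-Hku^2$ in \eqref{eqnn0}, together with the $\tau$-term. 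This is how the paper's Lemma~\ref{lemasol} proceeds; with this substitution the rest of your outline goes through and coincides with the paper's proof.
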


In order to make our approach more transparent, we now fix some terminology (see \cite{miao2009volume}). Let $(M^n, g)$ be a connected compact Riemannian manifold with boundary $\partial M$.  
We say that $g$ is a \emph{Miao--Tam critical metric} (or simply a \emph{critical metric}) if there exists a nonnegative smooth function $V$ on $M^n$ with $V^{-1}(0) = \partial M$ satisfying the overdetermined elliptic system
$$
-(\Delta_g V)g + \nabla^2_g V - V\operatorname{Ric}_g = g .
$$

The next result, which is inspired by Theorem~1 in \cite{diogenes2025integral}, carries important geometric implications. Estimates of this kind provide essential tools for classification results and for excluding certain classes of special metrics on a given manifold.

\begin{corollary}\label{cordne}
Let $(M^n, g, V)$, $n \geq 3$, be an $n$-dimensional connected compact oriented critical metric with connected boundary $\partial M$. Assume that $\operatorname{Ric}_g\geq (n-1)k g$ for some $k \in \mathbb{R}$. Then we have:
\begin{equation}\label{den1}
\begin{aligned}
\frac{1}{H}\int_{\partial M} \Big[|\nabla_{\partial M}u|^2 - (n-1)k u^2 + H^2 u\langle \nabla u, \nabla V\rangle \Big] ds 
\geq -k\big(R - (n-1)nk\big)\int_{M} V u^2\, dv,
\end{aligned}
\end{equation}
where $u$ is a solution of \eqref{usol}.
In particular, if $k \leq 0$, then
\begin{equation}\label{eqdne}
\frac{1}{H}\int_{\partial M} \Big[ |\nabla_{\partial M}u|^2 - (n-1)k u^2 + H^2 u\langle \nabla u, \nabla V\rangle \Big] dv \geq 0.
\end{equation}
Moreover, equality holds in the inequality \eqref{eqdne} if and only if $(M^n, g)$ is isometric to a geodesic ball in a simply connected space form, either hyperbolic space $\mathbb{H}^n$ or Euclidean space $\mathbb{R}^n$.
\end{corollary}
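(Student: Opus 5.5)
The plan is to specialize Theorem~\ref{QXRic} to Miao--Tam critical metrics. Here $V$ vanishes on $\partial M$, which kills all boundary terms carrying a factor $V$. I will assume, as the system \eqref{eq:main-system} presumably encodes with constants $\tau,\alpha$ in place of the Miao--Tam normalization, that the relevant boundary quantities are expressed through $\partial V/\partial\nu$. Since $V$ vanishes on $\partial M$ and is positive inside, $\nabla V=-|\nabla V|\nu$ on $\partial M$. For critical metrics it is known (Miao--Tam) that $|\nabla V|$ is a positive constant on each boundary component, and that the boundary is umbilic-like in the sense that $H$ is a positive constant on a connected boundary, with $|\nabla V|=1/H$ (or the appropriate relation $\partial V/\partial\nu=-1/H$, obtained by tracing the critical equation on $\partial M$ and using $V=0$ there). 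This fixes the term $\tau$ in \eqref{eqnn0}: it plays the role of $\partial V/\partial\nu=-1/H$.

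The first step is to apply the generalized Reilly formula (Proposition~\ref{QuiXiaIden}), or directly \eqref{eqnn0}, with this $V$. The first boundary integral in \eqref{eqnn0} vanishes because $V|_{\partial M}=0$. The term $-\tau\int_{\partial M}(|\nabla_{\partial M}u|^2-(n-1)ku^2)$ becomes $\frac1H\int_{\partial M}(|\nabla_{\partial M}u|^2-(n-1)ku^2)\,ds$.

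The second step is to rewrite the remaining term $-\alpha\sigma\int_{\partial M}u^2$. Using $\partial u/\partial\nu=\sigma u$ and $\nabla V=-\frac1H\nu$, we get
\[
u\langle\nabla u,\nabla V\rangle=-\frac{\sigma}{H}u^2
\]
on $\partial M$. Hence $-\alpha\sigma\int u^2=\frac{\alpha}{1}\int H u\langle\nabla u,\nabla V\rangle$. I expect that, with the normalization of the critical system, $\alpha$ equals $H$ up to the factor needed to produce $\frac1H\int H^2u\langle\nabla u,\nabla V\rangle$. Tracking exactly which constants $\tau,\alpha$ correspond to the Miao--Tam equation, and the sign conventions, is the main bookkeeping obstacle. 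If \eqref{eq:main-system} produces the boundary term differently, I would instead redo the Reilly computation directly with $V|_{\partial M}=0$ and collect terms. Combining these gives \eqref{den1}.

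For $k\le0$, note that $R$ is constant for critical metrics: tracing the equation gives $-(n-1)\Delta V - RV=n$, and the standard divergence argument gives that $R$ is constant. Also $\operatorname{Ric}\ge(n-1)kg$ gives $R\ge n(n-1)k$. So $-k(R-n(n-1)k)\ge0$, which yields \eqref{eqdne}. For the equality case, equality in \eqref{eqdne} forces equality in \eqref{den1} and in Theorem~\ref{QXRic} when $k<0$; when $k=0$ the right side is zero anyway. Then $\nabla^2u+kug=0$ with $u$ nontrivial, and Ric attains its lower bound along $\nabla u$. Choosing $u$ nonconstant (a nontrivial Steklov-type solution), Obata/Reilly-type rigidity for $\nabla^2u=-kug$ on a manifold with boundary shows that $M$ is a domain in $\mathbb{R}^n$ or $\mathbb{H}^n$. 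Combined with the critical-metric structure (constant $H$ and umbilic boundary), $M$ is a geodesic ball, by Miao--Tam's classification of critical metrics that are Einstein or locally conformally flat. The delicate point in this step is ensuring $u$ is nonconstant, especially when $k=0$ and $\sigma=0$. The converse follows by checking that geodesic balls admit $u$ (coordinate or $\sinh r$-type functions) giving equality.
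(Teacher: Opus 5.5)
Your derivation of \eqref{den1} and \eqref{eqdne} follows the paper's argument: specialize Theorem~\ref{QXRic} using $V|_{\partial M}=0$, $\tau=\partial V/\partial\nu=-1/H$, and $\nabla V=-\frac1H\nu$ on $\partial M$. The constant you left open is simply $\alpha=1$. The Miao--Tam equation is exactly the interior equation of \eqref{eq:main-system} with $\alpha=1$; you do not need ``$\alpha$ equal to $H$''. Hence $-\alpha\sigma\int_{\partial M}u^2\,ds=H\int_{\partial M}u\langle\nabla u,\nabla V\rangle\,ds$, which is the $H^2u\langle\nabla u,\nabla V\rangle$ term of \eqref{den1}.

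\textbf{The equality case has a genuine gap.} You claim that Obata/Reilly-type rigidity for $\nabla^2u=-kug$, with the Robin condition $\partial u/\partial\nu=\sigma u$, makes $M$ a domain in $\mathbb{R}^n$ or $\mathbb{H}^n$. This is false for $k\le0$. A nonconstant solution only gives a local warped product $dr^2+\phi(r)^2g_N$ with arbitrary fiber $N$, and nothing forces $u$ to have a critical point. For example, take $N\times[-1,1]$ with the product metric and $N$ a round sphere, and set $u=t$. Then $\Delta u=0$, $\partial u/\partial\nu=u$ on both boundary components, $\nabla^2u=0$ and $\operatorname{Ric}(\nabla u,\nabla u)=0$, yet the metric is not flat. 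Similarly, $u=\sinh r$ on $dr^2+\cosh^2r\,g_N$, $|r|\le r_0$, works for $k=-1$ with $\sigma=\coth r_0$. These are not critical metrics, but they show that the Hessian equation plus the Ricci condition along $\nabla u$ cannot give you an Einstein metric, let alone a space form.

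\textbf{The missing idea is to keep the right-hand side of \eqref{den1}.} Equality in \eqref{eqdne} forces $-k\big(R-n(n-1)k\big)\int_M Vu^2\,dv=0$. For $k<0$, $u\not\equiv0$ and $V>0$ in the interior, this gives $R=n(n-1)k$. Then $\operatorname{Ric}-(n-1)kg$ is nonnegative with zero trace, hence zero, so $g$ is Einstein. Miao--Tam's theorem on Einstein critical metrics then gives the geodesic ball directly. This is exactly the paper's route, and it needs no Obata step.

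\textbf{Your caution about $k=0$ is warranted.} The paper's argument does not cover this case either: the right-hand side of \eqref{den1} vanishes identically, so $R=0$ is not forced. With $k=\sigma=0$ and $u\equiv1$, equality holds on every critical metric with $\operatorname{Ric}\ge0$, e.g.\ a geodesic ball of radius less than $\pi/2$ in $\mathbb{S}^n$. So the rigidity statement needs $u$ nonconstant at $k=0$.

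For nonconstant $u$ the case can be closed as follows.
\begin{enumerate}
\item $\nabla u$ is parallel and nonzero, so $\operatorname{Ric}(\nabla u,\cdot)=0$.
\item The critical equation then gives $d\langle\nabla u,\nabla V\rangle=-\frac{1+RV}{n-1}\,du$.
\item Applying $d$ yields $R\,dV\wedge du=0$.
\item If $R\neq0$, then since $dV\neq0$ on $\partial M$, the boundary would be a level set of $u$. It would therefore be totally geodesic, contradicting $\Pi=\frac{H}{n-1}g$ with $H>0$.
\end{enumerate}
Hence $R=0$, which with $\operatorname{Ric}\ge0$ gives $\operatorname{Ric}=0$, and the same Einstein classification applies.
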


An immediate consequence, obtained by setting $\alpha = \tau = 0$ (see Eq.~(2.3) in \cite{cruz2019prescribing}), is the following.

\begin{corollary}\label{CQXRic}
Let $(M^n, g)$, $n \geq 3$, be an $n$-dimensional connected compact oriented manifold with boundary. Let $V$ be a positive function on $M$ satisfying \eqref{eq:main-system} with $\alpha = \tau = 0$ and $\operatorname{Ric}_g \geq (n-1)k g,$ where $k$ is a constant. Then we have:
\begin{equation}\label{eq:cor-special}
\begin{aligned}
-\int_{\partial M} V\Big[ &2\sigma u\big(\Delta_{\partial M}u + (n-1)ku\big) + H\sigma^2 u^2 \\
&+ \frac{2H}{n-1}|\nabla_{\partial M}u|^2 - Hku^2 \Big]\, ds
\geq -k\big(R - (n-1)nk\big)\int_{M} V u^2\, dv,
\end{aligned}
\end{equation}
where $u$ is a solution of \eqref{usol}.  
Moreover, equality holds in the inequality \eqref{eq:cor-special} if and only if
$$
\operatorname{Ric}_g(\nabla u,\nabla u) = (n-1)k |\nabla u|^2 \quad \text{and}\quad \nabla_g^2 u + ku g = 0. \quad
$$
In this case,
$\Delta_{\partial M}u + (n-1)k u = -H\sigma u,
$
and  $$\int_{\partial M} u\, ds_g = 0 \quad \text{or} \quad H\sigma = -(n-1)k.
$$
\end{corollary}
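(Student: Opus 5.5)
Corollary~\ref{CQXRic} is the special case $\alpha=\tau=0$ of Theorem~\ref{QXRic}, so the plan is to specialize that theorem rather than reprove it. We will not use the explicit form of \eqref{eq:main-system} beyond what Theorem~\ref{QXRic} already needs. Presumably \eqref{eq:main-system} is the $V$-static-type system $\nabla^2_g V-(\Delta_g V)g-V\operatorname{Ric}_g=\alpha g$ in $M$, with $\partial V/\partial\nu=\tau$ (or $V=\tau$) on $\partial M$, as in Eq.~(2.3) of \cite{cruz2019prescribing}. Setting $\alpha=\tau=0$ is a legitimate choice of the parameters: it gives the static-type equation with Neumann-type boundary data, which is still covered by the hypotheses of Theorem~\ref{QXRic}. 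The other assumptions of the corollary ($n\ge 3$, connected compact oriented, $V>0$, $\operatorname{Ric}_g\geq (n-1)kg$, and $u$ solving \eqref{usol}) are exactly those of the theorem. Theorem~\ref{QXRic} therefore applies verbatim.

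In \eqref{eqnn0}, the term $-\tau\int_{\partial M}(|\nabla_{\partial M}u|^2-(n-1)ku^2)\,ds$ and the term $-\alpha\sigma\int_{\partial M}u^2\,ds$ vanish identically when $\tau=\alpha=0$. What remains is precisely \eqref{eq:cor-special}, with $R=R_g$.

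The equality statement follows in the same way. Equality in \eqref{eq:cor-special} is literally equality in \eqref{eqnn0} for the same data, so the characterization $\operatorname{Ric}_g(\nabla u,\nabla u)=(n-1)k|\nabla u|^2$, $\nabla_g^2u+kug=0$ is inherited. So are the consequences $\Delta_{\partial M}u+(n-1)ku=-H\sigma u$ and the dichotomy $\int_{\partial M}u\,ds_g=0$ or $H\sigma=-(n-1)k$. None of these conclusions depended on $\alpha$ or $\tau$ beyond their appearance in the dropped terms.

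The only genuine point to check is that $\alpha=\tau=0$ is an admissible instance of \eqref{eq:main-system}, i.e.\ that no step in the proof of Theorem~\ref{QXRic} divides by $\alpha$ or $\tau$ or uses their sign. As a safeguard, we would retrace the derivation. It inserts \eqref{eq:main-system} into Proposition~\ref{QuiXiaIden} with $u$ from \eqref{usol}. The interior $\nabla u$-term becomes $\alpha|\nabla u|^2$ plus terms that are nonnegative by the Ricci bound. The boundary $\partial V/\partial\nu$-term becomes $\tau(\ldots)$. We would confirm that these enter linearly, so setting them to zero is harmless. This is the main, though mild, obstacle; everything else is direct substitution.
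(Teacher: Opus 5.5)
Your proposal is correct and is exactly the paper's argument: the corollary follows by setting $\alpha=\tau=0$ in Theorem~\ref{QXRic}, which removes the $\tau$-term and the $\alpha\sigma$-boundary term in \eqref{eqnn0} and leaves the equality characterization and its consequences unchanged. One harmless inaccuracy: the boundary condition in \eqref{eq:main-system} is $\frac{\partial V}{\partial\nu}g - V\Pi_g=\tau g$, so for $\tau=0$ it is a Robin-type condition $\frac{\partial V}{\partial\nu}=\frac{H}{n-1}V$ on an umbilic boundary, not a Neumann condition; since you never use its explicit form, your specialization goes through as written.
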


\begin{remark}
Suppose that $V$ satisfies \eqref{eq:main-system} with constant scalar curvature $R_g$, and assume the constants are related by
$$
R_g \tau = \frac{H\alpha n}{n-1}.
$$
Setting $U = \Delta V$, one verifies that $U$ solves the boundary value problem
\begin{equation}\label{eq:U-system}
\left\{
\begin{aligned}
\Delta U + \frac{R_g}{n-1} U &= 0 && \text{in } M, \\[4pt]
\frac{\partial U}{\partial\nu} &= \frac{H}{n-1} U && \text{on } \partial M.
\end{aligned}
\right.
\end{equation}
Furthermore, choosing $\sigma = \frac{H}{n-1}$, a direct computation shows that the function
$$
u = -\frac{R_g}{n-1} V - \frac{\alpha n}{n-1}
$$
is a solution of \eqref{usol} with parameter $k = \dfrac{R_g}{n(n-1)}$.
\end{remark}

Motivated by the preceding discussion, we now investigate whether the Ricci lower bound $\operatorname{Ric}_g \geq (n-1)k g$ in Theorem~\ref{QXRic} can be replaced by a scalar curvature lower bound $R_g \geq n(n-1)k$. Under such a scalar curvature condition, we prove the following result.

\begin{theorem}\label{QXR}
Let $(M^n, g)$, $n \geq 3$, be an $n$-dimensional connected compact oriented manifold with boundary. Let $V$ be a positive function on $M$ satisfying \eqref{eq:main-system} and assume $R_g \geq n(n-1)k$ for a nonpositive constant $k$. Then we have:
\begin{equation}\label{eqn007}
\begin{aligned}
&-\int_{\partial M} V\Big[ 2\sigma u\big(\Delta_{\partial M}u + (n-1)ku\big)
   + \Big(\frac{n-2}{n-1}H\sigma^2 + 2nk\sigma - Hk\Big)u^2
   + \frac{H}{n-1}|\nabla_{\partial M}u|^2 \Big] ds \\
&\quad + \tau\big(\sigma^2 + (n-1)k\big)\int_{\partial M} u^2\, ds
   - 2\sigma\int_{\partial M} \langle\nabla u,\nabla V\rangle u\, ds \\
&\geq \frac{\alpha}{n-1}\int_M |\nabla u|^2\, dv - \alpha nk\int_M u^2\, dv
   + 2n^2k^2\int_M V u^2\, dv - (3n-2)k\int_M V|\nabla u|^2\, dv,
\end{aligned}
\end{equation}
where $u$ is a solution of \eqref{usol}.  
Moreover, equality holds in \eqref{eqn007} if and only if $R_g = n(n-1)k$ and $\nabla^2 u + ku g = 0$.
\end{theorem}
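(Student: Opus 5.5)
The plan is to apply the generalized Reilly formula of Proposition~\ref{QuiXiaIden} with the pair $(V,u)$, where $V$ is the positive solution of \eqref{eq:main-system} and $u$ solves \eqref{usol}. Everything involving $\operatorname{Ric}_g$ is then rewritten through \eqref{eq:main-system}, so that only the scalar curvature enters and the hypothesis $R_g\ge n(n-1)k$ can be used.

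I am assuming that \eqref{eq:main-system} has the shape $\nabla^2_gV-(\Delta_gV)g-V\operatorname{Ric}_g=\alpha g$ in $M$, together with a boundary condition fixing $\partial V/\partial\nu$ in terms of $\tau$. This is not confirmed by the statement above. It is only suggested by how $\alpha$ and $\tau$ enter the Remark and Theorem~\ref{QXRic}.

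\emph{Left-hand side and traced equation.} Since $\Delta u+nku=0$, the tensor $\nabla^2u+kug$ is traceless. Hence the left-hand side of the Reilly identity equals $-\int_MV|\nabla^2u+kug|^2\,dv\le 0$, with equality exactly when $\nabla^2u+kug=0$. Tracing \eqref{eq:main-system} gives
$$\Delta V=-\frac{VR_g+n\alpha}{n-1}.$$
I will use this in the term $(n-1)k\int_M(\Delta V+nkV)u^2$, which produces $-k\int_M(R_g-n(n-1)k)Vu^2$ plus an $\alpha\int_Mu^2$ contribution.

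\emph{Interior gradient term.} Substituting $V\operatorname{Ric}_g=\nabla^2V-(\Delta V)g-\alpha g$ into $\nabla^2V-(\Delta V)g-2(n-1)kVg+V\operatorname{Ric}_g$ leaves
$$2\nabla^2V(\nabla u,\nabla u)-2\Delta V|\nabla u|^2-2(n-1)kV|\nabla u|^2-\alpha|\nabla u|^2 .$$
The Hessian term I integrate by parts twice:
$$\int_M\nabla^2V(\nabla u,\nabla u)=\int_{\partial M}u_\nu\langle\nabla V,\nabla u\rangle-\int_M\langle\nabla V,\nabla u\rangle\Delta u-\tfrac12\int_M\langle\nabla V,\nabla|\nabla u|^2\rangle .$$
I then integrate the last integral by parts once more to bring out $\Delta V|\nabla u|^2$ and a boundary term $\partial_\nu V|\nabla u|^2$. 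In the middle integral, $\Delta u=-nku$ and $\int_Mu\langle\nabla V,\nabla u\rangle=\tfrac12\int_M\langle\nabla V,\nabla u^2\rangle$ reduce it to $\int_M\Delta V\,u^2$ plus boundary terms.

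At this point every interior $\Delta V$ is replaced via the traced equation. This produces $R_g$ multiplying $V|\nabla u|^2$ and $Vu^2$, and I apply $R_g\ge n(n-1)k$ there. Because $k\le0$ and $V>0$, the signs must come out so that this step gives an inequality in the correct direction; equality forces $R_g=n(n-1)k$ wherever $\nabla u\neq0$ or $u\ne0$, hence everywhere by unique continuation. The leftover $\int_M V|\nabla u|^2$ and $\int_M Vu^2$ terms, with coefficients $(3n-2)k$ and $2n^2k^2$, should come from combining these pieces, possibly after using $\int_M\operatorname{div}(Vu\nabla u)$ to trade $V|\nabla u|^2$ against $Vu^2$.

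\emph{Boundary terms.} On $\partial M$ I substitute $u_\nu=\sigma u$ and $\partial V/\partial\nu$ from the boundary condition; the latter produces the $\tau$ terms. The term $u_\nu\langle\nabla V,\nabla u\rangle$ gives the $-2\sigma\int_{\partial M}u\langle\nabla u,\nabla V\rangle$ term. The splitting of the $H$ terms into $\frac{H}{n-1}|\nabla_{\partial M}u|^2$ and $\frac{n-2}{n-1}H\sigma^2u^2$ should come from evaluating \eqref{eq:main-system} on $\partial M$ to express $\Pi$ in terms of the normal derivative of $V$ and $H$. Alternatively, it may come from the identity $\Delta u=\Delta_{\partial M}u+Hu_\nu+\nabla^2u(\nu,\nu)$, with $\nabla^2u(\nu,\nu)$ controlled by the traceless condition.

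\emph{Main obstacle.} I expect the hardest part to be this boundary bookkeeping, together with checking that the interior coefficients assemble exactly into the right-hand side of \eqref{eqn007}. My first step there will be to write every term in the normalized form $V(\cdots)$ plus $\partial_\nu V(\cdots)$, and then match coefficients with \eqref{eqn007}.

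\emph{Equality case.} Equality holds exactly when both discarded nonnegative quantities vanish. These are $\int_M V|\nabla^2u+kug|^2$ and the curvature excess $\int_M(R_g-n(n-1)k)(\cdots)$. This gives precisely $\nabla^2u+kug=0$ and $R_g=n(n-1)k$.
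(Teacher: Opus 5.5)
Your outline follows the paper's route: Qiu--Xia with the pair $(V,u)$, eliminate $V\operatorname{Ric}_g$ via \eqref{eq:main-system}, integrate $\nabla^2V(\nabla u,\nabla u)$ by parts twice, then insert \eqref{eq:trace-system}. Your zeroth-order and gradient terms are computed correctly. \textbf{The gap is the cross term} $2nk\int_M u\langle\nabla V,\nabla u\rangle\,dv$ that comes out of the Hessian integration.

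You write this term as $nk\int_M\langle\nabla V,\nabla u^2\rangle$, integrate by parts, and substitute $\Delta V=-\frac{R_gV+n\alpha}{n-1}$. That adds $\frac{nkR_g}{n-1}\int_M Vu^2$. The total coefficient of $R_g$ in front of $\int_M Vu^2$ then becomes $-k+\frac{nk}{n-1}=\frac{k}{n-1}\le 0$. With $k\le 0$, the hypothesis $R_g\ge n(n-1)k$ bounds this term from \emph{above}, which is the wrong direction. The combined $R_g$-coefficient $\frac{1}{n-1}\bigl(\int_M V|\nabla u|^2+k\int_M Vu^2\bigr)$ has no sign either. So your assertion that ``the signs must come out'' is exactly where the argument fails. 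Even ignoring signs, this route produces constants different from those in \eqref{eqn007}:
\begin{itemize}
\item $n^2k^2$ instead of $2n^2k^2$;
\item $-(n-2)k$ instead of $-(3n-2)k$;
\item $\frac{n\alpha k}{n-1}\int_M u^2$ instead of $-\alpha nk\int_M u^2$;
\item a boundary term $nk\int_{\partial M}u^2\,\partial_\nu V$ in place of $2nk\sigma\int_{\partial M}Vu^2$.
\end{itemize}

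The missing step is the identity you mention only as an optional afterthought. It must \emph{replace} the $\frac12\nabla u^2$ reduction, not follow it. Integrating $\operatorname{div}(Vu\nabla u)=V|\nabla u|^2+u\langle\nabla V,\nabla u\rangle-nkVu^2$ gives
\[
\int_M u\langle\nabla V,\nabla u\rangle\,dv=\sigma\int_{\partial M}Vu^2\,ds-\int_M V|\nabla u|^2\,dv+nk\int_M Vu^2\,dv ,
\]
which contains no $\Delta V$ and hence no $R_g$. With this, $R_g$ appears only in $-k\bigl(R_g-n(n-1)k\bigr)\int_M Vu^2$ and $\frac{1}{n-1}\bigl(R_g-2(n-1)(2n-1)k\bigr)\int_M V|\nabla u|^2$, both with nonnegative weights. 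The lower bound $\frac{1}{n-1}\bigl(n(n-1)k-2(n-1)(2n-1)k\bigr)=-(3n-2)k$ and the product $2nk\cdot nk=2n^2k^2$ then give exactly the right side of \eqref{eqn007}.

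For the boundary bookkeeping you were unsure about: the boundary condition is $\frac{\partial V}{\partial\nu}g-V\Pi=\tau g$. Since $V>0$, this gives $\Pi=\frac{H}{n-1}g$ and $\partial_\nu V=\frac{H}{n-1}V+\tau$. The split into $\frac{H}{n-1}|\nabla_{\partial M}u|^2$ and $\frac{n-2}{n-1}H\sigma^2u^2$, and the term $\tau\bigl(\sigma^2+(n-1)k\bigr)\int_{\partial M}u^2$, come from adding the Qiu--Xia boundary terms to $-\int_{\partial M}\partial_\nu V\bigl(|\nabla_{\partial M}u|^2+\sigma^2u^2\bigr)\,ds$ from your second integration by parts. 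They do not come from the decomposition of $\Delta u$.
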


As a consequence, we obtain a result analogous to Theorem~2 in \cite{diogenes2025integral}.

\begin{corollary}\label{cordne2}
Let $(M^n, g, V)$, $n \geq 3$, be an $n$-dimensional connected compact oriented critical metric with connected boundary $\partial M$. Assume $R_g \geq n(n-1)k$ for a nonpositive constant $k$. Then we have:
\begin{equation}\label{eqn008}
\begin{aligned}
H\int_{\partial M} \langle \nabla V, \nabla u\rangle^2\, ds
- \frac{(n-1)k}{H}\int_{\partial M} u^2\, ds
&\geq \frac{1}{n-1}\int_M |\nabla u|^2\, dv - nk\int_M u^2\, dv \\
&\quad + 2n^2k^2\int_M V u^2\, dv - (3n-2)k\int_M V|\nabla u|^2\, dv,
\end{aligned}
\end{equation}
where $u$ is a solution of \eqref{usol}.  
Moreover, equality holds in \eqref{eqn008} if and only if $R_g = n(n-1)k$ and $\nabla_g^2 u + ku g = 0$.
\end{corollary}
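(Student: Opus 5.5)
The plan is to obtain Corollary~\ref{cordne2} by specializing Theorem~\ref{QXR} to a critical metric. A Miao--Tam critical metric is the case of \eqref{eq:main-system} with $\alpha=1$, $V\geq 0$ and $V^{-1}(0)=\partial M$. I am assuming that the boundary constant $\tau$ in \eqref{eq:main-system} plays the role of $\partial V/\partial\nu$; the check at the end supports this. One caveat: Theorem~\ref{QXR} asks for $V>0$ on $M$, while here $V=0$ on $\partial M$. I expect to handle this either by noting that the proof of Theorem~\ref{QXR} only uses $V\geq 0$ (the weight in the Qiu--Xia identity of Proposition~\ref{QuiXiaIden}), or by running the argument on the sublevel domains $\{V\geq\varepsilon\}$ and letting $\varepsilon\to 0$.

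First I would recall the boundary geometry of critical metrics. Tracing $-(\Delta V)g+\nabla^2V-V\operatorname{Ric}=g$ gives $\Delta V=-(RV+n)/(n-1)$, so $\Delta V=-n/(n-1)$ on $\partial M$. Since $V|_{\partial M}=0$ and $V>0$ inside, we have $\nabla V=-|\nabla V|\nu$ on $\partial M$. Restricting the equation to tangent vectors $X,Y$ of $\partial M$ gives
\[
\nabla^2V(X,Y)=-|\nabla V|\,\Pi(X,Y)=(\Delta V+1)g(X,Y)=-\tfrac{1}{n-1}g(X,Y).
\]
Hence $\partial M$ is umbilic with $\Pi=\frac{1}{(n-1)|\nabla V|}g$. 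Testing the equation with one tangent and one normal vector shows that $|\nabla V|$ is constant on $\partial M$, since $X(|\nabla V|)$ is controlled by $\nabla^2V(X,\nu)=0$. Therefore $H=1/|\nabla V|$ is a positive constant and $\partial V/\partial\nu=-1/H$, which gives $\tau=-1/H$.

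Next I would insert these facts into \eqref{eqn007} with $\alpha=1$. The whole first boundary integral has the factor $V$, so it vanishes. Using \eqref{usol}, on $\partial M$ we get
\[
\langle\nabla u,\nabla V\rangle=-\tfrac1H\,\tfrac{\partial u}{\partial\nu}=-\tfrac{\sigma u}{H}.
\]
So $-2\sigma\int_{\partial M}\langle\nabla u,\nabla V\rangle u\,ds=\frac{2\sigma^2}{H}\int_{\partial M}u^2\,ds$. The $\tau$-term becomes $-\frac{\sigma^2+(n-1)k}{H}\int_{\partial M}u^2\,ds$. Adding these, the left side of \eqref{eqn007} equals
\[
\frac{\sigma^2}{H}\int_{\partial M}u^2\,ds-\frac{(n-1)k}{H}\int_{\partial M}u^2\,ds .
\]
Since $H\langle\nabla V,\nabla u\rangle^2=\sigma^2u^2/H$ on $\partial M$, this is exactly the left side of \eqref{eqn008}. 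The right side of \eqref{eqn007} with $\alpha=1$ is verbatim the right side of \eqref{eqn008}. The equality characterization transfers directly from Theorem~\ref{QXR}.

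The step I expect to need the most care is justifying the use of Theorem~\ref{QXR} when $V$ vanishes on $\partial M$, which I would address as described in the first paragraph. I would also confirm the sign convention $\tau=\partial V/\partial\nu$ in \eqref{eq:main-system}. The arithmetic above supports that convention: with $\tau=+1/H$ the boundary terms would sum to $3\sigma^2/H$ instead of $\sigma^2/H$, and the result would not match \eqref{eqn008}.
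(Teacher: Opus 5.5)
Your proposal is correct and is the paper's proof. You specialize Theorem~\ref{QXR} with $\alpha=1$ and $V|_{\partial M}=0$, which kills the first boundary integral, and take $\tau=\partial V/\partial\nu=-1/H$; the relation $\sigma u=-H\langle\nabla u,\nabla V\rangle$ on $\partial M$ then turns $\tau\sigma^2u^2-2\sigma u\langle\nabla u,\nabla V\rangle$ into $H\langle\nabla V,\nabla u\rangle^2$. Your extra remarks are sound and a bit more careful than the paper: $\tau=\partial V/\partial\nu$ follows directly from the traced boundary condition once $V=0$ on $\partial M$, and the inequality in Theorem~\ref{QXR} uses only $V\ge 0$, whereas the paper applies the theorem without addressing its $V>0$ hypothesis.
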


Let us now provide some motivation for our next result. Ambrozio [\cite{ambrozio2017static}, Proposition 6] generalized earlier work of Shen \cite{shen1997note} and Boucher--Gibbons--Horowitz \cite{boucher1984uniqueness} concerning integral identities for static potentials. Specifically, Ambrozio established the following fundamental formula for three-dimensional static triples:

\begin{proposition}[\cite{ambrozio2017static}, Proposition 6]
Let $(M^3, g,V)$ be a compact oriented static triple with scalar curvature $R_g=6$. Denote by $\partial_1M,\dots, \partial_rM$ the connected components of $\partial M$ and by $k_i$ the constant value of $|\nabla V|$ on $\partial_iM$. Then
$$
\sum_{i=1}^r k_i|\partial_iM| + \int_M|\mathring{\operatorname{Ric}}|^2V\,dv = 2\pi\sum_{i=1}^r k_i\chi(\partial_iM),
$$
where $|\partial_iM|$ is the area of $\partial_iM$ and $\chi(\partial_iM)$ is its Euler characteristic.
\end{proposition}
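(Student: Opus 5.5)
The plan is to integrate a divergence identity built from the static equations and the traceless Ricci tensor, and then to evaluate the boundary term using the Gauss equation and Gauss--Bonnet. Recall that a static triple $(M^3,g,V)$ satisfies $\nabla^2_g V-(\Delta_g V)g-V\operatorname{Ric}_g=0$ in $M$, with $V>0$ in the interior and $V^{-1}(0)=\partial M$.

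First I extract the consequences of the static equation. Taking the trace gives $-2\Delta V - RV=0$, so $\Delta V=-3V$ since $R_g=6$. Substituting back, $\nabla^2 V = V(\operatorname{Ric}-2g)$, that is,
$$
\nabla^2V=V\mathring{\operatorname{Ric}}+\tfrac{\Delta V}{3}g .
$$
So the traceless part of the Hessian of $V$ is $V\mathring{\operatorname{Ric}}$.

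Next I compute the divergence of the vector field $X=\mathring{\operatorname{Ric}}(\nabla V,\cdot)^\sharp$:
$$
\operatorname{div}X=(\operatorname{div}\mathring{\operatorname{Ric}})(\nabla V)+\langle \mathring{\operatorname{Ric}},\nabla^2V\rangle .
$$
By the contracted Bianchi identity, $\operatorname{div}\mathring{\operatorname{Ric}}=\tfrac{n-2}{2n}dR=0$ because $R$ is constant. The second term equals $V|\mathring{\operatorname{Ric}}|^2$ by the previous step. Integrating over $M$ gives
$$
\int_M V|\mathring{\operatorname{Ric}}|^2\,dv=\int_{\partial M}\mathring{\operatorname{Ric}}(\nabla V,\nu)\,ds .
$$

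Then I analyze the boundary. On $\partial_iM$ we have $V=0$ and $V>0$ inside, with $|\nabla V|=k_i$. I take $k_i>0$, which is part of the definition of a static triple, so $\nabla V=-k_i\nu$ there. The second fundamental form of $\partial M$ is, up to sign, $\nabla^2V|_{T\partial M}/|\nabla V|$. This vanishes because $\nabla^2V=V(\operatorname{Ric}-2g)=0$ on $\partial M$, so each component is totally geodesic. The Gauss equation $R-2\operatorname{Ric}(\nu,\nu)=2K+H^2-|\Pi|^2$ then gives $\operatorname{Ric}(\nu,\nu)=3-K$, where $K$ is the Gauss curvature of $\partial_iM$. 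Hence $\mathring{\operatorname{Ric}}(\nu,\nu)=\operatorname{Ric}(\nu,\nu)-2=1-K$. The boundary integral therefore becomes $-\sum_i k_i\int_{\partial_iM}(1-K)\,ds$. Applying Gauss--Bonnet, $\int_{\partial_iM}K\,ds=2\pi\chi(\partial_iM)$, and rearranging yields the stated identity.

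The main obstacle is keeping signs straight in the boundary step. This involves the orientation of $\nu$ relative to $\nabla V$, and checking that the total-geodesicity argument and the regularity of $\partial M$ (which needs $k_i\neq 0$) are legitimate. The interior computation is routine once the traceless decomposition of $\nabla^2V$ is in hand.
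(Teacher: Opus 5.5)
Your proof is correct. It is essentially the route the paper takes for its generalization, Theorem~\ref{generalizedAmb}; the paper itself only cites this proposition from Ambrozio. There, Schoen's Pohozaev identity with $X=\nabla V$ reduces, for constant $R_g$, to exactly your integrated divergence identity for $\mathring{\operatorname{Ric}}(\nabla V,\cdot)$, and the Gauss equation on $V^{-1}(0)$ plus Gauss--Bonnet then finish the computation.

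There are only minor slips, and none affects the argument, since $V=0$ and $\Pi=0$ on $\partial M$:
\begin{itemize}
\item From $\Delta V=-3V$ one gets $\nabla^2V=V(\operatorname{Ric}-3g)$, not $V(\operatorname{Ric}-2g)$. Your second expression, $V\mathring{\operatorname{Ric}}+\tfrac{\Delta V}{3}g$, is the correct one.
\item The Gauss equation reads $R-2\operatorname{Ric}(\nu,\nu)=2K-H^2+|\Pi|^2$.
\item $k_i>0$ is a standard consequence of the static equation, not part of the definition.
\end{itemize}
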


More recently, Cruz and Nunes [\cite{cruz2023critical}, Theorem 1.2] proved an analogue of the Shen \cite{shen1997note} and Boucher--Gibbons--Horowitz \cite {boucher1984uniqueness} results for static manifolds with boundary. Our next result extends Ambrozio's formula to the setting of the $V$-static-type equation.

\begin{theorem}\label{generalizedAmb}
Let $(M^3,g)$ be an orientable compact Riemannian manifold with nonempty boundary $\partial M$. 
Suppose there exists a non-identically zero function $V$ on $M$ satisfying \eqref{eq:main-system} for some $\alpha \in \mathbb{R}$ and $\tau =0$. 
Assume that $\Sigma = V^{-1}(0)$ is connected and intersects $\partial M$ nontrivially, i.e., $\Sigma \cap \partial M \neq \emptyset$. 
Let $\Omega$ be a connected component of $M \setminus \Sigma$ on which $V > 0$, and suppose $S=\partial \Omega \setminus \Sigma$. 
Then the following integral identity holds:
\begin{align*}
\int_{\Omega} V |\mathring{\operatorname{Ric}}_g|^2 \, dv 
&= \beta \left[ 2\pi \chi(\Sigma) -  |\Sigma|\left(\frac{R_g}{6}+\dfrac{H^2}{2}+\dfrac{\alpha^2}{4\beta^2}\right) \right]  \\
&\quad  
+ \frac{\alpha}{2} \bigl( L(\Gamma) - H |S| +\dfrac{3}{2}H^2|\Omega|\bigr)+\dfrac{R_gH^2}{4}\displaystyle\int_{\Omega}Vdv-\dfrac{RH}{6}\displaystyle\int_SVds,
\end{align*}
where $\chi(\Sigma)$ is the Euler characteristic of $\Sigma$ and $L(\Gamma)$ denotes the length of $\Gamma = \Sigma \cap S$.
\end{theorem}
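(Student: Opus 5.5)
The plan is to reduce the identity to a divergence formula for the traceless Ricci tensor contracted with $\nabla V$, integrate it over $\Omega$, and evaluate the two boundary pieces: $\Sigma$ by the Gauss equation and Gauss--Bonnet, and $S$ by the boundary condition in \eqref{eq:main-system}. I am assuming \eqref{eq:main-system} with $\tau=0$ has the form
\[
\nabla^2_g V-(\Delta_g V)g-V\operatorname{Ric}_g=\alpha g \quad\text{in } M,
\]
together with a Robin-type condition on $\partial M$ in which the constant $H$ enters. I am also assuming $\partial M$ has constant mean curvature $H$ and $\beta$ denotes the constant value of $|\nabla V|$ on $\Sigma$; both are inferred from how $H$ and $\beta$ appear in the statement.

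\textbf{Interior divergence formula.} Tracing the equation (with $n=3$) gives $\Delta V=-\tfrac12(R_gV+3\alpha)$. Taking the divergence and using the contracted Bianchi identity then shows $V\,dR_g=0$, so $R_g$ is constant on $\Omega$. Substituting back, the traceless part satisfies $\mathring{\nabla}^2V=V\mathring{\operatorname{Ric}}_g$. Using $\operatorname{div}\mathring{\operatorname{Ric}}_g=\tfrac16 dR_g=0$, I get
\[
\operatorname{div}\big(\mathring{\operatorname{Ric}}_g(\nabla V,\cdot)\big)=\langle \mathring{\operatorname{Ric}}_g,\nabla^2V\rangle=V|\mathring{\operatorname{Ric}}_g|^2 .
\]
Integrating over $\Omega$, whose boundary is $\Sigma\cup S$ with corner $\Gamma$, gives
\[
\int_\Omega V|\mathring{\operatorname{Ric}}_g|^2\,dv=\int_\Sigma \mathring{\operatorname{Ric}}_g(\nabla V,\nu)\,ds+\int_S \mathring{\operatorname{Ric}}_g(\nabla V,\nu)\,ds .
\]

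\textbf{The $\Sigma$ term.} On $\Sigma$ we have $V=0$, so $\nabla^2V=\alpha g$ there. For $X$ tangent to $\Sigma$ this gives $X|\nabla V|^2=2\alpha\langle\nabla V,X\rangle=0$, so $|\nabla V|=\beta$ is constant on the connected set $\Sigma$. The outward normal is $\nu=-\nabla V/\beta$, and $\Pi_\Sigma=\nabla^2V|_{T\Sigma}/\beta=(\alpha/\beta)g_\Sigma$, so $\Sigma$ is umbilic with $H_\Sigma^2=4\alpha^2/\beta^2$ and $|\Pi_\Sigma|^2=2\alpha^2/\beta^2$. The $\Sigma$-integral is $-\beta\int_\Sigma(\operatorname{Ric}(\nu,\nu)-R_g/3)\,ds$. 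The Gauss equation gives
\[
\operatorname{Ric}(\nu,\nu)=\tfrac{R_g}{2}-K_\Sigma+\tfrac12\big(H_\Sigma^2-|\Pi_\Sigma|^2\big),
\]
so the integrand reduces to $\int_\Sigma K_\Sigma$ plus constants, which produces the $R_g/6$ and $\alpha^2/(4\beta^2)$ terms times $|\Sigma|$. Gauss--Bonnet with boundary then gives $\int_\Sigma K_\Sigma=2\pi\chi(\Sigma)-\int_\Gamma\kappa_g$.

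\textbf{The $S$ term and the corner.} On $S$ I will use the boundary part of \eqref{eq:main-system} to express $\partial V/\partial\nu$ through $V$ and $H$. I will then decompose $\mathring{\operatorname{Ric}}_g(\nabla V,\nu)=V^{-1}\mathring{\nabla}^2V(\nabla V,\nu)$, or handle it directly via the equation, into normal and tangential parts along $S$. The tangential part $\nabla^2V(\nabla_S V,\nu)=\langle\nabla_S(\partial_\nu V),\nabla_S V\rangle-\Pi(\nabla_SV,\nabla_SV)$ is integrated by parts on $S$, whose boundary is $\Gamma$. This should generate $-\tfrac{R_gH}{6}\int_SV$, $-\tfrac{\alpha}{2}H|S|$, and a line integral over $\Gamma$. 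The $\tfrac32H^2|\Omega|$ and $\tfrac{R_gH^2}{4}\int_\Omega V$ terms should come from converting $\int_S V$-type terms back into the interior using $\int_\Omega\Delta V=\int_{\partial\Omega}\partial_\nu V$ and the trace formula for $\Delta V$.

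\textbf{Main obstacle.} I expect the hardest step to be the corner $\Gamma$. There the geodesic curvature $\kappa_g$ of $\Gamma\subset\Sigma$ from Gauss--Bonnet must cancel against the line integral produced on $S$, leaving exactly $\tfrac{\alpha}{2}L(\Gamma)$. This requires computing the contact angle between $\Sigma$ and $\partial M$ from the boundary condition, since $\nabla V$ is normal to $\Sigma$ and its normal component on $\partial M$ is prescribed. I will try first to show the angle is constant (likely $\pi/2$), so that $\kappa_g$ equals $\Pi_{\partial M}(T,T)$ up to the umbilicity contribution of $\Sigma$. Matching the coefficients in front of $|\Sigma|$ and $L(\Gamma)$ will then be a bookkeeping check.
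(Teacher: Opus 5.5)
Your plan follows the paper's route: the divergence identity $\operatorname{div}\big(\mathring{\operatorname{Ric}}_g(\nabla V,\cdot)\big)=V|\mathring{\operatorname{Ric}}_g|^2$ (the paper gets it from Schoen's Pohozaev identity with $X=\nabla V$), then Gauss plus Gauss--Bonnet on $\Sigma$, and on $S$ the boundary condition together with $\int_S\partial_\nu V=\int_\Omega\Delta V-\int_\Sigma\partial_\xi V$. The genuine gap is the corner step. It cannot leave $\tfrac{\alpha}{2}L(\Gamma)$.

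Since $\tau=0$, $\partial_\nu V=\tfrac H2 V$ vanishes on $\Gamma$. So $\xi=-\nabla V/\beta$ is tangent to $\partial M$ there, and $\Sigma$ meets $\partial M$ orthogonally. The outward conormal of $\Gamma$ in $\Sigma$ is therefore $\nu$, and in $S$ it is $\xi$. Hence $\kappa_g=-\langle\nabla_TT,\nu\rangle=\Pi(T,T)=\tfrac H2$ exactly. The umbilicity of $\Sigma$ plays no role, because $A^\Sigma(T,T)$ is the $\xi$-component of $\nabla_TT$, which is normal to $\Sigma$. Gauss--Bonnet then gives $\beta\int_\Sigma K=2\pi\beta\chi(\Sigma)-\tfrac{\beta H}{2}L(\Gamma)$. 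The only line integral from $S$ is $-\tfrac H2\int_S\Delta_SV=-\tfrac H2\int_\Gamma\langle\nabla V,\xi\rangle=\tfrac{\beta H}{2}L(\Gamma)$, and the two cancel completely.

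The paper gets the extra $\tfrac\alpha2L(\Gamma)$ only through its formula $k_\Gamma=\tfrac H2-\tfrac{\alpha}{2\beta}$, which is wrong for exactly this reason. The identity as stated fails for $\alpha\neq0$. Take the unit ball in $\mathbb{R}^3$ and $V=-\tfrac\alpha4(|x|^2+1)+\tfrac{\sqrt{2}}{2}\alpha x_3$ with $\alpha>0$. This solves the system with $\tau=0$, $H=2$, $R_g=0$, and:
\begin{itemize}
\item $\Sigma$ is the part of the unit sphere about $\sqrt{2}\,e_3$ inside the ball, with $\beta=\tfrac\alpha2$ and $\chi(\Sigma)=1$;
\item $|\Sigma|=|S|=2\pi(1-\tfrac{1}{\sqrt{2}})$ and $|\Omega|=\tfrac{2\pi}{3}(1-\tfrac{1}{\sqrt{2}})^2(2+\tfrac{1}{\sqrt{2}})$;
\item $L(\Gamma)=\sqrt{2}\,\pi$.
\end{itemize}
The left side is $0$. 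The right side is $\tfrac\alpha2\big(2\pi-3|\Sigma|-2|S|+6|\Omega|+L(\Gamma)\big)=\tfrac{\sqrt{2}\,\pi\alpha}{2}\neq0$, since the first four terms sum to zero. Carried out correctly, your plan proves the identity with $\tfrac\alpha2L(\Gamma)$ deleted, so do not try to force that term out of the corner.

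There are also two smaller slips. First, on $\Sigma$ the equation gives $\nabla^2V=(\Delta V+\alpha)g$, and tracing gives $\Delta V=-\tfrac{3\alpha}{2}$. So $\nabla^2V=-\tfrac\alpha2 g$, not $\alpha g$. Hence $A^\Sigma=-\beta^{-1}\nabla^2V|_{T\Sigma}=\tfrac{\alpha}{2\beta}g_\Sigma$, $H_\Sigma=\alpha/\beta$, and $\tfrac12(H_\Sigma^2-|A^\Sigma|^2)=\tfrac{\alpha^2}{4\beta^2}$. Your values would give $\alpha^2/\beta^2$, not the coefficient you claim to recover.

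Second, the tangential part on $S$ vanishes identically: $\nabla^2V(\nabla_SV,\nu)=\tfrac H2|\nabla_SV|^2-\tfrac H2|\nabla_SV|^2=0$. This is the paper's $\operatorname{Ric}_g(\nu,X)=0$, obtained from Codazzi. Every contribution on $S$ comes from the normal part $\tfrac H2V\mathring{\operatorname{Ric}}_g(\nu,\nu)$, after substituting $V\operatorname{Ric}_g(\nu,\nu)=-\Delta_SV-H\partial_\nu V-\alpha$. This includes $-\tfrac{R_gH}{6}\int_SV$, $-\tfrac{\alpha H}{2}|S|$ and the $\Gamma$ line integral.
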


\begin{remark}
We observe that if $\alpha=0$, then we recover Theorem 3.1 in \cite{medvedev2025some}.
\end{remark}

The paper is organized as follows. In Section~\ref{preliminares}, we introduce basic notation, recall previous results on the linearization of curvature, and establish several properties of the $V$-static-type equation. Section~\ref{example} provides explicit examples within this framework. Finally, Section~\ref{proofs} contains the proofs of our main results.

\section{Preliminaries}\label{preliminares}

In this section, we recall some basic facts that will be used throughout the text and establish preliminary results. We also present examples of $V$-static-type equations on manifolds with boundary.

Fischer and Marsden \cite{fischer1975deformations} studied deformations of scalar curvature on a smooth manifold $M$. They analyzed the scalar curvature map 
$$
\mathcal{R} \colon \mathcal{M} \longrightarrow C^{\infty}(M),
$$
which assigns to each metric $g \in \mathcal{M}$ its scalar curvature $R_g$. Here $\mathcal{M}$ denotes the space of Riemannian metrics on $M$ and $C^{\infty}(M)$ is the space of smooth functions on $M$.

To understand the local behavior of this map, one considers its linearization at a metric $g$,
$$
L_g \colon S_2(M) \longrightarrow C^{\infty}(M),
$$
given by
$$
L_g(h) = -\Delta_g(\operatorname{tr}_g h) + \operatorname{div}_g\operatorname{div}_g h - \langle h, \operatorname{Ric}_g \rangle,
$$
and its formal $L^2$-adjoint
$$
L_g^* \colon C^{\infty}(M) \longrightarrow S_2(M),
$$
where $S_2(M)$ is the space of symmetric $(0,2)$-tensors on $M$. A direct computation yields
$$
L_g^*(V) = -(\Delta_g V)\, g + \nabla^2_g V - V\operatorname{Ric}_g .
$$

Using the linearization $L_g$ and its adjoint, Fischer and Marsden proved the local surjectivity of the scalar curvature map. We recall that a Riemannian manifold $(M^n,g)$ is called \emph{static} if there exists a nontrivial function $V$ on $M$ satisfying $L_g^*(V) = 0$ (see \cite{corvino2000scalar}). Standard metrics on $\mathbb{R}^n$, $\mathbb{H}^n(k)$, and $\mathbb{S}^n_+(k)$ are all static and admit a positive static potential. It is well known that a static metric necessarily has constant scalar curvature (cf.\ \cite{corvino2000scalar}, Proposition~2.3). Static metrics have been extensively studied in mathematical relativity (see, e.g., \cite{corvino2013deformation, brendle2013constant}).

Motivated by this framework, Cruz and Vit\'orio \cite{cruz2019prescribing} considered a compact Riemannian manifold $(M^n,g)$ with boundary and studied the curvature prescription problem for manifolds with boundary. They showed that the map
$$
g \mapsto (R_g, \; 2H_g)
$$
is surjective for almost all boundary data. Later, Cruz and Silva Santos \cite{cruz2023critical} investigated variational properties of the volume and boundary-area functionals restricted to the space of Riemannian metrics with prescribed curvature. More precisely, given constants $\alpha, \tau \in \mathbb{R}$, they introduced the map
$$
\Psi(g) = \bigl( R_g,\; 2H_g,\; -2\alpha \operatorname{Vol}(g),\; -2\tau \operatorname{Area}(g) \bigr),
$$
defined on the space of Riemannian metrics on a smooth compact manifold with nonempty boundary.

It is well known (cf.\ \cite{cruz2019prescribing}, Section~2) that the first variation of the mean curvature is given by
\begin{equation}\label{eq:varH}
\delta H_g\cdot h = \frac{d}{dt}\Big|_{t=0} H(g(t)) 
= \frac{1}{2}\left((d(\tr_g h) -\div_gh))(\nu) - \operatorname{div}_g\!\big|_{T_{\partial M}} X - \langle \Pi_g, h \rangle\right),
\end{equation}
where $\nu$ is the outward unit normal to $\partial M$, $X$ is the vector field dual to the one-form $\omega(\cdot)=h(\cdot,\nu)$, $\operatorname{tr}_g h = g^{ij}h_{ij}$ is the trace of $h$, and $\Delta_g = \operatorname{tr}_g(\nabla_g^2)$ denotes the Laplacian, with $\nabla_g^2$ being the Hessian operator.

Using Green's formula, one obtains the identity
$$
\langle \delta R_gh, V \rangle_{L^2(M)} - \langle L^*_g V, h \rangle_{L^2(M)}
= \langle B^* V, h \rangle_{L^2(\partial M)} - \langle 2 \delta H_gh, V \rangle_{L^2(\partial M)},
$$
where 
$$
B^*_g(V) = \frac{\partial V}{\partial \nu}g - V\Pi_g \quad \text{on } \partial M.
$$

Recall also that the linearization of the area and of the volume functional are given by
$$
\delta\operatorname{Area}_g\cdot h = \frac{1}{2}\int_{\partial M} \operatorname{tr}_g(h|_{\partial M})\, ds
\quad \text{and} \quad
\delta\operatorname{Vol}_g\cdot h = \frac{1}{2}\int_{M} \operatorname{tr}_g h\, dv .
$$

The linearization of $\Psi(g)$ is denoted by $S_g(h)$, was studied in detail by Cruz and Silva Santos \cite{cruz2023critical} and its formal $L^2$-adjoint $S_g^*$ is given by
$$
S_g^*(V,\alpha,\tau) = \bigl( L^*_g V - \alpha g,\; B^*_g V - \tau g \bigr).
$$

Thus, a triple $(V,\alpha,\tau)$ in the kernel of $S_g^*$ satisfies the PDE system
\begin{equation}\label{eq:main-system}
\left\{
\begin{aligned}
-(\Delta_g V)g + \nabla^2_g V - V\operatorname{Ric}_g &= \alpha g, && \text{in } M, \\[4pt]
\frac{\partial V}{\partial \nu} g - V \Pi_g &= \tau g, && \text{on } \partial M,
\end{aligned}
\right.
\end{equation}
which is a second-order overdetermined elliptic system with boundary condition.

\begin{remark}\label{rmk:volume-mass}
V-static metrics have recently been interpreted in the context of mathematical relativity. In \cite{McCormick2025}, McCormick showed that for asymptotically hyperbolic manifolds, critical points of the volume-renormalised mass over the space of constant scalar curvature metrics are exactly V-static metrics. This extends the classical correspondence between static metrics and critical points of the ADM mass. For manifolds with boundary, the appropriate boundary condition is to fix the Bartnik data $(\partial M, g_{\partial M}, H)$, and the corresponding critical points are precisely V-static metrics satisfying \eqref{eq:main-system} with $\tau = 0$ and $\alpha$ related to the asymptotic value of the potential \cite{McCormick2025}. This variational characterization motivates the rigidity results obtained in Section 4, where we show that under suitable curvature bounds, such V-static metrics must be isometric to geodesic balls in space forms.
\end{remark}

Taking the trace of \eqref{eq:main-system} yields
\begin{equation}\label{eq:trace-system}
\left\{
\begin{aligned}
\Delta_g V &= -\frac{R_g}{n-1} V - \frac{\alpha n}{n-1}, && \text{in } M, \\[4pt]
\frac{\partial V}{\partial \nu} &= \frac{H}{n-1} V + \tau, && \text{on } \partial M.
\end{aligned}
\right.
\end{equation}

Moreover, a straightforward computation gives
\begin{equation}\label{eq:traceless-system}
\left\{
\begin{aligned}
\mathring{\nabla_g^2 V} &= V \mathring{\operatorname{Ric}}_g, && \text{in } M, \\
V \mathring{\Pi_g} &= 0, && \text{on } \partial M,
\end{aligned}
\right.
\end{equation}
where $\mathring{T}$ denotes the traceless part of a tensor $T$.

We say that a Riemannian manifold $(M^n,g)$ with nonempty boundary $\partial M$ satisfies the \emph{$V$-static-type equation} if there exists a nontrivial smooth solution $V$ to the system \eqref{eq:main-system}. We now establish several properties for metrics satisfying \eqref{eq:main-system}, which are analogous to those of $V$-static manifolds with boundary (see \cite{miao2009volume, cruz2023critical, cruz2023static}).

\begin{proposition}\label{propertiees}
Let $(M^n,g)$, $n \geq 3$, be a connected compact Riemannian manifold with nonempty boundary $\partial M$. Suppose there is a non-identically zero function $V$ in the interior and on $\partial M$ satisfying \eqref{eq:main-system} for some $\alpha, \tau \in \mathbb{R}$, and assume that $\Sigma = V^{-1}(0)$ is nonempty. Then the following holds:

\begin{enumerate}[label={(\alph*)}]
    \item The scalar curvature $R_g$ is constant.
    
    \item The mean curvature $H_g$ is constant and $\Pi_g = \dfrac{H_g}{n-1} g$ on $\partial M$.
    
    \item At each point of $\partial M$, we have
    $$
    R_{\partial M} - \frac{n-2}{n-1} H^2 = R_g - 2\operatorname{Ric}_g(\nu, \nu),
    $$
    where $R_{\partial M}$ denotes the scalar curvature of $\partial M$ with the induced metric.
    
    \item Consider the functional on the space of Riemannian metrics defined by
    $$
    \mathcal{F}(g) = \int_M R_g V \, dv + 2\int_{\partial M} H V \, da - 2\alpha \operatorname{Vol}(g) - 2\tau \operatorname{Area}(\partial M, g),
    $$
    where $V$ is a given smooth nontrivial function on $M$. Then $g$ is a critical point of $\mathcal{F}$.
    
    \item On $\partial M$, we have $$\operatorname{Ric}_g(\nu, X) = 0,$$ 
    for any vector $X$ tangent to $\partial M$, where $\nu$ denotes the outward unit normal vector field to $\partial M$.
    
    \item At regular points of $V$, the norm $|\nabla_M V|$ is a positive constant on each connected component of $\Sigma$.
    
    \item $\Sigma$ is totally umbilical with mean curvature $H_\Sigma = \dfrac{\alpha}{|\nabla_M V|}$.
    
    \item $\Sigma$ is an embedded totally umbilical hypersurface in $M$. More precisely:
    \begin{itemize}
        \item If $\Sigma \cap \partial M = \emptyset$, then $\Sigma$ is a totally umbilical hypersurface contained in $\operatorname{int}(M)$.
        \item If $\Sigma \cap \partial M \neq \emptyset$, then each connected component $\Sigma_0$ of $\Sigma$ intersecting $\partial M$ is a capillary totally umbilical hypersurface of $M$.
\end{itemize}
\end{enumerate}
\end{proposition}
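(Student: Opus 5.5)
We prove the eight items essentially in the order listed, working from the trace and traceless forms \eqref{eq:trace-system} and \eqref{eq:traceless-system} of \eqref{eq:main-system}.

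\textbf{Item (a).} Take the divergence of the interior equation. We use the commutation $\operatorname{div}\nabla^2 V = d\Delta V + \operatorname{Ric}(\nabla V,\cdot)$ and the contracted Bianchi identity $\operatorname{div}\operatorname{Ric}=\tfrac12 dR$. The terms $-d\Delta V$, $+d\Delta V$ and $\pm\operatorname{Ric}(\nabla V,\cdot)$ cancel, leaving $-\tfrac12 V\,dR_g=0$, since $\alpha g$ is parallel. Hence $dR_g=0$ on $\{V\neq 0\}$. Because $V\not\equiv 0$ solves a second-order elliptic equation (the trace equation), unique continuation makes $\{V\neq0\}$ dense, so $R_g$ is constant.

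\textbf{Items (b), (c), (e).} These are boundary statements.
\begin{itemize}
\item For (b), restrict the boundary equation to $T\partial M$: $V\Pi=(\partial_\nu V-\tau)g$. Taking the trace gives $V\mathring{\Pi}=0$. Where $V\ne0$ on $\partial M$ this gives umbilicity. On the zero set of $V|_{\partial M}$, the same equation forces $\partial_\nu V=\tau$. We then argue that $V|_{\partial M}$ cannot vanish on an open subset of $\partial M$ (otherwise $V$ and $\partial_\nu V$ would be prescribed there, and unique continuation for the Cauchy problem would give a contradiction when $\tau=0$, or handle the set directly). Umbilicity therefore holds on a dense set, hence everywhere by continuity.
\item Constancy of $H$ then comes from the Codazzi equation. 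For an umbilic hypersurface with $n-1\ge2$, Codazzi gives $\operatorname{Ric}(\nu,X)=-\tfrac{n-2}{n-1}X(H)$.
\item For (e), compute $\operatorname{Ric}(\nu,X)$ independently from the interior equation: $V\operatorname{Ric}(\nu,X)=\nabla^2V(\nu,X)$. Differentiating the boundary relation $\partial_\nu V=\tfrac{H}{n-1}V+\tau$ tangentially yields $\nabla^2V(\nu,X)=X(\partial_\nu V)-\Pi(X,\nabla V)=\tfrac{X(H)}{n-1}V$. Combining the two expressions, $\operatorname{Ric}(\nu,X)=\tfrac{X(H)}{n-1}$ on the dense set where $V\neq0$.
\item Comparing with Codazzi gives $\big(\tfrac{1}{n-1}+\tfrac{n-2}{n-1}\big)X(H)=0$, so $X(H)=0$. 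This proves both (b) and (e).
\item Item (c) is the twice-contracted Gauss equation $R_{\partial M}=R_g-2\operatorname{Ric}(\nu,\nu)+H^2-|\Pi|^2$ with $|\Pi|^2=H^2/(n-1)$.
\end{itemize}

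\textbf{Item (d).} Compute the first variation of $\mathcal F$ in the direction $h$. By the Green-type identity displayed before \eqref{eq:main-system}, the variations of $\int R_gV$ and $2\int HV$ combine to
$$\langle L_g^*V,h\rangle_{L^2(M)}+\langle B_g^*V,h\rangle_{L^2(\partial M)},$$
plus the terms coming from varying the volume forms. To handle the volume-form terms cleanly, the natural reading is to vary $\int R_g V$ at fixed $V$ and absorb $\tfrac12 R_gV\operatorname{tr}h$, or to follow the convention of \cite{cruz2023critical}, in which these terms are part of $S_g^*$. The volume and area terms contribute $-\alpha\int\operatorname{tr}h$ and $-\tau\int_{\partial M}\operatorname{tr}(h|_{\partial M})$. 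Altogether the variation is $\langle S_g^*(V,\alpha,\tau),h\rangle$, which vanishes by \eqref{eq:main-system}. I expect matching the volume-form conventions in this step to be the main source of care rather than real difficulty.

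\textbf{Items (f), (g), (h).} These concern $\Sigma=V^{-1}(0)$.
\begin{itemize}
\item On $\Sigma$ the interior equation reduces to $\nabla^2V=(\Delta V+\alpha)g$, and the trace equation gives $\Delta V=-\tfrac{\alpha n}{n-1}$ there. Thus $\nabla^2V=-\tfrac{\alpha}{n-1}g$ on $\Sigma$.
\item For (f): for $X$ tangent to $\Sigma$, $X|\nabla V|^2=2\nabla^2V(X,\nabla V)=0$, so $|\nabla V|$ is locally constant on $\Sigma$. Regular points are generic, because $V$ and $\nabla V$ cannot vanish simultaneously on an open set of $\Sigma$ (unique continuation, using the Hessian identity).
\item For (g): the second fundamental form of $\Sigma$ is $\nabla^2V|_{T\Sigma}/|\nabla V|=-\tfrac{\alpha}{(n-1)|\nabla V|}g$. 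This gives umbilicity and mean curvature of size $\alpha/|\nabla V|$, with the sign fixed by the choice of normal.
\item For (h): (f) shows $\Sigma$ is a regular embedded hypersurface. Where $\Sigma$ meets $\partial M$, the boundary condition at $V=0$ gives $\partial_\nu V=\tau$. Hence the angle between $N=\nabla V/|\nabla V|$ and $\nu$ satisfies $\langle N,\nu\rangle=\tau/|\nabla V|$, which is constant on each component. That is exactly the capillary condition.
\end{itemize}

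The main obstacle is ruling out degenerate zero sets: $V$ vanishing on open pieces of $\partial M$, and critical points of $V$ on $\Sigma$. I would handle these by Cauchy-data unique continuation for the elliptic system, using that $V$ and $\nabla V$ both vanishing at a point forces $V\equiv0$ along geodesics via the ODE $\nabla^2V=V\operatorname{Ric}+(\Delta V+\alpha)g$ when $\alpha=0$. When $\alpha\ne0$, the Hessian $-\tfrac{\alpha}{n-1}g$ is nondegenerate, so such critical points are isolated.
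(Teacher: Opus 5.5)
For (e)--(h) you argue as the paper does: where $V=0$ the equation gives $\nabla^2V=-\frac{\alpha}{n-1}g$. From this follow local constancy of $|\nabla V|$, umbilicity of $\Sigma$, and $\partial_\nu V=\tau$ along $\Sigma\cap\partial M$. For (a)--(d) the paper only cites Proposition~2.2 of Cruz--Silva Santos, whereas you argue directly. Your divergence/Bianchi proof of (a) and the Gauss-equation derivation of (c) are correct. So is computing $\operatorname{Ric}(\nu,X)$ twice (Codazzi versus differentiating $\partial_\nu V=\frac{H}{n-1}V+\tau$), which gives $X(H)=0$ and (e) together; the paper instead gets (e) from (b).

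The genuine gap is in (b). Your claim that $V|_{\partial M}$ cannot vanish on an open set is false when $\tau\neq0$. Critical metrics ($V\equiv0$ on $\partial M$, $\alpha=1$, $\tau=-1/|\nabla V|$) are exactly this case, and they are the case the paper's corollaries use. There the set where $V\neq0$ on $\partial M$ is empty, so neither umbilicity nor $X(H)=0$ follows, and ``handle the set directly'' hides the missing step. Let $W$ be the interior of $\{V=0\}\cap\partial M$. On $W$ one has $\nabla V=\tau\nu$, hence $\nabla^2V(X,Y)=\tau\,\Pi(X,Y)$ for $X,Y$ tangent. But $\nabla^2V=-\frac{\alpha}{n-1}g$ there, so $\tau\Pi=-\frac{\alpha}{n-1}g$ on $W$:
\begin{itemize}
\item If $\tau\neq0$, this is umbilicity with constant $H=-\alpha/\tau$.
\item If $\tau=0$, it forces $\alpha=0$, and your ODE argument gives $V\equiv0$.
\end{itemize}
Since $(\{V\neq0\}\cap\partial M)\cup W$ is dense in $\partial M$, (b) and (e) then follow.

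A second gap is in (d). The measure-variation terms can neither be ``absorbed'' nor found inside $S_g^*$. If $dv$ and $da$ vary with the metric, then
\[
\delta\mathcal F\cdot h=\langle S_g^*(V,\alpha,\tau),h\rangle+\tfrac12\int_M R_gV\operatorname{tr}h\,dv+\int_{\partial M}HV\operatorname{tr}(h|_{\partial M})\,da .
\]
For $h=g$ the extra terms equal $\frac n2R_g\int_MV\,dv+(n-1)H\int_{\partial M}V\,da$. Take the unit ball with $V=-\frac{\alpha}{2(n-1)}|x|^2$, $\tau=-\frac{\alpha}{2(n-1)}$ and $\alpha\neq0$; this solves the system. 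There the extra terms equal $-\frac{(n-1)\alpha}{2}|\mathbb S^{n-1}|\neq0$. So (d) holds only if $\int R_{\tilde g}V$ and $\int H_{\tilde g}V$ are integrated against the fixed measures of $g$. State that convention, and your computation is complete.

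The same example has $\Sigma=\{0\}$. In general, when $\alpha\neq0$ your definite Hessian makes every critical zero of $V$ a strict local extremum, hence an isolated point component of $\Sigma$. So your claim that ``$\Sigma$ is a regular embedded hypersurface'' in (h) holds only for the positive-dimensional components. The paper's proof has the same blind spot.
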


\begin{proof}
Items (a), (b), (c) and (d) follow from Proposition 2.2 in \cite{cruz2023critical}. 

Item (e) is a consequence of (b). Indeed, the contracted Codazzi equation states
$$
\operatorname{div}_{\partial M} \Pi_g - dH_g = \operatorname{Ric}_g(\nu, \cdot).
$$
Since $\Pi_g = \frac{H}{n-1} g$ and $H_g$ is constant, we obtain
$$
\operatorname{Ric}_g(\nu, X) = -\frac{n-2}{n-1} X(H_g) = 0
$$
for all $X \in \mathfrak{X}(\partial M)$.

We now prove item (f). At regular points of $V$, the vector field
$$
\nu = -\frac{\nabla_M V}{|\nabla_M V|}
$$
is normal to $\Sigma$. From $\mathring{\nabla^2 V} = V \mathring{\operatorname{Ric}}_g$ we infer that on $\Sigma$,
$$
\nabla^2 V = \frac{\Delta V}{n} g .
$$
Consequently,
$$
X(|\nabla_M V|^2) = 2\langle \nabla_X \nabla_M V, \nabla_M V \rangle 
= 2 \nabla^2 V(X, \nabla_MV) 
=  \frac{2\Delta V}{n} g(X, \nabla_M V) = 0
$$
for any $X \in \mathfrak{X}(\Sigma)$. Hence $|\nabla_MV|$ is constant on each connected component of $\Sigma$.

To prove item (g), we consider an orthonormal frame $\{e_i\}_{i=1}^n$ with $e_n = -\frac{\nabla V}{|\nabla V|} = \nu$. 
Let $\Pi_{ab}$, $1 \leq a, b \leq n-1$, denote the components of the second fundamental form of $\Sigma$. 
By definition and using $\nabla^2 V = \frac{\Delta V}{n} g$ on $\Sigma$, we obtain
\begin{equation}\label{eq:A-second-fundamental}
\Pi_{ab} = \langle \nabla_{e_a} \nu, e_b \rangle 
= -\frac{1}{|\nabla V|} \langle \nabla_{e_a} \nabla V, e_b \rangle 
= -\frac{1}{|\nabla V|} \nabla_a \nabla_b V 
= -\frac{\Delta V}{n|\nabla V|} g_{ab} 
= \frac{\alpha}{(n-1)|\nabla V|} g_{ab}.
\end{equation}
Thus, $\Sigma$ is totally umbilical with mean curvature
$$
H_\Sigma = \frac{\alpha}{|\nabla V|}.
$$

Finally, we turn to item (h). Let $\Sigma_0$ be a connected component of $\Sigma$ such that $\Sigma_0 \cap \partial M \neq \emptyset$. 
To verify that $\Sigma_0$ meets $\partial M$ with a constant angle along $\partial \Sigma_0$, observe that at any point $q \in \partial \Sigma_0 \cap \partial M$,
\begin{align*}
\langle \nabla_M V(q), \nu(q) \rangle 
&= \left\langle \nabla_{\partial M} V(q) + \frac{\partial V}{\partial \nu} \nu(q), \nu(q) \right\rangle \\
&= \frac{\partial V}{\partial \nu}(q) 
= -\frac{H_g}{n-1} V(q) + \tau 
= \tau,
\end{align*}
since $V(q) = 0$ on $\partial \Sigma_0$. This shows that the angle between $\nabla_M V$ and $\nu$ is constant along the intersection.
\end{proof}

Next we study properties of the kernel of $S_g^*$.

\begin{proposition}\label{prop:constant-in-kernel}
Let $(V,\alpha,\tau)$ in $\ker(S_g^*)$ such that $V$ is a nonzero constant function. Then $(M^n,g)$ is Einstein with scalar curvature $R_g = -\alpha n/V$, and the boundary $\partial M$ is totally umbilical with mean curvature $H = -\tau(n-1)/V$.
\end{proposition}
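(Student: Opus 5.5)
The plan is to substitute $V\equiv c\neq 0$ directly into the system \eqref{eq:main-system}. Since $V$ is constant, $\nabla_g V=0$, $\nabla^2_g V=0$, $\Delta_g V=0$, and $\partial V/\partial\nu=0$ on $\partial M$.

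For the interior equation, everything except the Ricci term drops out, leaving
$$-c\operatorname{Ric}_g=\alpha g.$$
Since $c\neq0$, this gives $\operatorname{Ric}_g=-\frac{\alpha}{c}\,g$, so $g$ is Einstein. Taking the trace yields $R_g=-\alpha n/c$. We can cross-check this with the trace identity \eqref{eq:trace-system}: with $\Delta_g V=0$ it reads $0=-\frac{R_g}{n-1}c-\frac{\alpha n}{n-1}$, which gives the same value.

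For the boundary equation, $\partial V/\partial\nu=0$ reduces the second line of \eqref{eq:main-system} to
$$-c\,\Pi_g=\tau g \quad\text{on } T\partial M.$$
The metric $g$ here is the one induced on $\partial M$, since $\Pi_g$ is a tensor on the boundary. Hence $\Pi_g=-\frac{\tau}{c}\,g$, so $\partial M$ is totally umbilical. Tracing over the $(n-1)$-dimensional boundary gives $H=-\tau(n-1)/c$. Again this agrees with the boundary line of \eqref{eq:trace-system}: $0=\frac{H}{n-1}c+\tau$.

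There is no real obstacle in this argument. The only points needing care are that the boundary equation is read on tangent vectors to $\partial M$, so its trace is over $n-1$ directions, and that every step divides by $c$, which is where $c\neq0$ is used. Writing $V$ for the constant $c$ gives exactly the stated formulas.
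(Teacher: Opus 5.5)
Your proof is correct and is essentially the paper's argument. The paper also substitutes the constant $V=c$ into \eqref{eq:main-system}, concludes $\mathring{\operatorname{Ric}}_g=0$ and $\mathring{\Pi}_g=0$, and reads off $R_g=-\alpha n/c$ and $H=-\tau(n-1)/c$ from the traced equations. That is the same computation as your direct derivation of $\operatorname{Ric}_g=-\frac{\alpha}{c}g$ and $\Pi_g=-\frac{\tau}{c}g$.
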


\begin{proof}
Let $V = c$ be a nonzero constant function such that $(V,\alpha,\tau)$ belongs to $\ker(S_g^*)$. From \eqref{eq:main-system} we obtain $\mathring{\operatorname{Ric}}_g = 0$ in $M$ and $\mathring{\Pi}_g = 0$ on $\partial M$. Moreover, using \eqref{eq:traceless-system} we conclude that $R_g =- \alpha n/c$ and $H = -\tau(n-1)/c$.
\end{proof}

A simple converse is the following:

\begin{proposition}\label{prop:Einstein-umbilical-gives-kernel}
If $(M^n, g)$ is Einstein with totally umbilical boundary, then $\ker(S_g^*) \neq \{0\}$.
\end{proposition}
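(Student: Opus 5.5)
The natural approach is to exhibit an explicit nonzero element of the kernel by taking $V$ to be a nonzero constant, that is, to run Proposition~\ref{prop:constant-in-kernel} in reverse. Since $(M^n,g)$ is Einstein, we have $\operatorname{Ric}_g = \frac{R_g}{n}g$. For $n\geq 3$, the contracted second Bianchi identity forces $R_g$ to be constant, provided $M$ is connected, which is the standing assumption. Likewise, total umbilicity of the boundary gives $\Pi_g = \frac{H}{n-1}g$ on $\partial M$.

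Fix a constant $c\neq 0$ and set $V\equiv c$. Then $\nabla^2_g V=0$ and $\Delta_g V=0$, so the interior equation in \eqref{eq:main-system} reduces to
$$-c\operatorname{Ric}_g = -\frac{cR_g}{n}\,g = \alpha g.$$
This holds with the constant $\alpha := -\frac{cR_g}{n}$. On the boundary we have $\frac{\partial V}{\partial\nu}=0$, so the boundary equation reads
$$-c\,\Pi_g = -\frac{cH}{n-1}\,g = \tau g,$$
and it holds with $\tau := -\frac{cH}{n-1}$.

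Thus $(c,\alpha,\tau)$ lies in $\ker(S_g^*)$ and is nonzero because $c\neq0$. This gives exactly the values of Proposition~\ref{prop:constant-in-kernel}, read in the other direction.

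The only point requiring care is that $H$ must be a \emph{constant}, so that $\tau$ is a genuine real number rather than a function. Umbilicity alone gives $\Pi_g=\frac{H}{n-1}g$ with $H$ possibly varying. To handle this, I would invoke the contracted Codazzi equation as in the proof of Proposition~\ref{propertiees}(e):
$$\operatorname{div}_{\partial M}\Pi_g - dH = \operatorname{Ric}_g(\nu,\cdot).$$
Because the metric is Einstein, $\operatorname{Ric}_g(\nu,X)=\frac{R_g}{n}g(\nu,X)=0$ for $X$ tangent to $\partial M$. Hence
$$\frac{1}{n-1}dH - dH = 0, \qquad\text{i.e.}\qquad -\frac{n-2}{n-1}\,dH=0,$$
and since $n\geq3$ this gives $dH=0$. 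So $H$ is locally constant on $\partial M$, and therefore constant on each boundary component.

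If $\partial M$ is disconnected, I expect this to be the main obstacle. In that case I would either note that $\tau$ is intended to be constant on each component, or interpret the statement with the implicit assumption that $H$ is constant, as in the surrounding propositions where $H$ is constant.
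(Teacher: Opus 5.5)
Your proof is correct when $\partial M$ is connected. It uses the same idea as the paper, a constant potential, but your choice of constants is the right one and the paper's is not. The paper concludes $(V,0,0)\in\ker(S_g^*)$ because a constant $V$ satisfies the traceless system \eqref{eq:traceless-system}. That system only says $L_g^*V$ and $B_g^*V$ are pure trace. For $c\neq 0$, $(c,0,0)$ lies in the kernel only if $g$ is Ricci-flat with totally geodesic boundary; it already fails for a spherical cap, where $L_g^*c=-(n-1)c\,g$. The trace parts force $\alpha=-cR_g/n$ and $\tau=-cH/(n-1)$, exactly as you found and consistent with Proposition~\ref{prop:constant-in-kernel}. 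Your Schur and Codazzi steps, which the paper omits, are needed to make $\alpha$ and $\tau$ constants, and both correctly use $n\geq 3$.

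Your concern about disconnected boundary is not a gap in your argument but a defect in the statement itself. Your first suggested remedy, letting $\tau$ be constant on each component separately, is not admissible, because $\tau$ is a single real number in the definition of $\Psi$ and $S_g^*$. Here is a counterexample.
\begin{itemize}
\item Let $M$ be a flat torus $T^n$ with two disjoint round balls of radii $r_1\neq r_2$ removed. Then $g$ is Einstein and each boundary sphere is totally umbilical.
\item On the connected preimage of $M$ in $\mathbb{R}^n$, the interior equation gives $V=-\frac{\alpha}{2(n-1)}|x|^2+\langle b,x\rangle+c$.
\item $\mathbb{Z}^n$-periodicity forces $\alpha=0$ and $b=0$, so $V\equiv c$.
\item The boundary equation then reads $-c\,\Pi_g=\tau g$. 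On the $i$-th sphere $\Pi_g=\epsilon\,r_i^{-1}g$, with a sign $\epsilon\in\{\pm1\}$ independent of $i$, so $c/r_1=-\epsilon\tau=c/r_2$.
\item Since $r_1\neq r_2$, this gives $c=\tau=0$, hence $\ker(S_g^*)=\{0\}$.
\end{itemize}
So the proposition needs the extra hypothesis that $\partial M$ is connected, or that $H$ takes the same value on every component. With that hypothesis your proof is complete.
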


\begin{proof}
Since $\mathring{\operatorname{Ric}}_g = 0$ in $M$ and $\mathring{\Pi}_g = 0$ on $\partial M$, any nonzero constant function $V$ on $M$ satisfies \eqref{eq:traceless-system} with $\alpha=\tau=0$. Hence $(V,0,0) \in \ker(S_g^*)$.
\end{proof}

Motivated by results in \cite{sheng2025static} for the vacuum static equation, we now recall a basic uniqueness lemma.

\begin{lemma}[\cite{sheng2025static}, Lemma 2.5]\label{lem:unique-solution}
There is no nontrivial solution to the boundary value problem
\begin{equation}\label{eq:trivial-solution}
\left\{
\begin{aligned}
L_g^* V &= 0, && \text{in } M, \\[4pt]
V &= \dfrac{\partial V}{\partial \nu}=0, && \text{on } \partial M.
\end{aligned}
\right.
\end{equation}
\end{lemma}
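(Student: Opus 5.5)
We argue by a unique continuation argument for solutions of $L_g^*V=0$. Let $V$ solve \eqref{eq:trivial-solution}. The plan is to show that $V$ vanishes to infinite order along $\partial M$, and then that $V\equiv 0$ on the connected manifold $M$.

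\emph{Step 1: first derivatives vanish on the boundary.} Since $V=0$ on $\partial M$, all tangential derivatives of $V$ vanish there. Together with $\partial V/\partial\nu=0$, this gives $V=0$ and $\nabla V=0$ at every point of $\partial M$.

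\emph{Step 2: $V$ solves a linear second-order system along curves.} Trace $L_g^*V=0$ to get $\Delta_g V=-\frac{R_g}{n-1}V$. Substituting back into the equation yields
\[
\nabla^2_g V = V\Big(\operatorname{Ric}_g-\frac{R_g}{n-1}g\Big).
\]
Fix any unit-speed geodesic $\gamma$ and set $f(t)=V(\gamma(t))$. Then
\[
f''(t)=f(t)\Big(\operatorname{Ric}_g(\gamma',\gamma')-\frac{R_g}{n-1}\Big).
\]
This is a linear second-order ODE in $f$ alone. Similarly, the pair $(V,\nabla V)$ satisfies a closed first-order linear system along $\gamma$:
\[
\nabla_{\gamma'}\nabla V = V\Big(\operatorname{Ric}_g-\frac{R_g}{n-1}g\Big)(\gamma',\cdot),\qquad (V\circ\gamma)'=\langle\nabla V,\gamma'\rangle .
\]

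\emph{Step 3: propagate the vanishing into the interior.} Take a point $p$ in the interior of $M$. Choose a piecewise geodesic path inside $M$ from a boundary point $q$ to $p$, arranged so that its first segment leaves $q$ transversally into the interior; this is possible because $M$ is connected. At $q$ the initial data $(V,\nabla V)$ vanish by Step 1. By uniqueness for the linear ODE system of Step 2, $(V,\nabla V)$ vanishes along the whole first segment. The same argument applies segment by segment, since the data $(V,\nabla V)$ at each junction point are zero. Hence $V(p)=0$, so $V\equiv 0$ on $M$.

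\emph{Main obstacle.} The delicate point is Step 3 near the boundary: the geodesics must be kept inside $M$ and must start properly at $\partial M$. I would handle this with short interior geodesics in a collar neighborhood of $\partial M$, or alternatively by extending $g$ slightly beyond $\partial M$.

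An alternative route avoids the boundary issue. The zero set $\{V=0,\ \nabla V=0\}$ is closed. It is also open in the interior of $M$, because of the local ODE uniqueness along geodesics emanating from one of its points. It is nonempty, since points of $\partial M$ are limits of interior points where the data tend to zero. Connectedness of $M$ then gives $V\equiv 0$.
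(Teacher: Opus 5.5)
\textbf{Verdict: your main proof (Steps 1--3) is correct; the ``alternative route'' at the end has a gap.}

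The paper gives no proof of this lemma; it imports it directly from \cite{sheng2025static}, so there is no internal argument to match. Your Steps 1--3 are a correct, self-contained proof, and they use the standard Fischer--Marsden/Corvino mechanism. Tracing turns $L_g^*V=0$ into $\nabla_g^2V=V\,T$ with $T=\operatorname{Ric}_g-\frac{R_g}{n-1}g$. Then $(V,\nabla V)$ satisfies a closed linear first-order system along curves, and the vanishing of $(V,\nabla V)$ on $\partial M$ from Step 1 propagates to all of $M$. Compared with the bare citation, your version shows which hypotheses actually matter: $M$ must be connected and $\partial M\neq\emptyset$. Without them the statement fails: on a closed round sphere the boundary conditions are vacuous and $V=x_{n+1}$ is a nontrivial solution.

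Your ``main obstacle'' is not really an obstacle. The system
\[
\tfrac{d}{dt}V(c(t))=\langle\nabla V,c'\rangle,\qquad \nabla_{c'}\nabla V=V\,T(c',\cdot)^\sharp
\]
holds along any $C^1$ curve $c$ in $M$, not only along geodesics. Equivalently, $(V,\nabla V)$ is a parallel section of a linear connection on $\underline{\mathbb{R}}\oplus TM$. So you can join a boundary point $q$ to an interior point $p$ by any piecewise smooth path in $M$. This removes every question about geodesics starting on $\partial M$ or staying inside $M$, and it also means you do not need connectedness of $\operatorname{int}(M)$ separately.

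The ``alternative route'' is not correct as written. Set $Z=\{V=0,\ \nabla V=0\}$.
\begin{itemize}
\item If you regard $Z$ as a subset of $M$, it is trivially nonempty because it contains $\partial M$. But you have only shown openness at interior points, so the open--closed argument in $M$ is incomplete.
\item If you regard $Z\cap\operatorname{int}(M)$ as a subset of $\operatorname{int}(M)$, then closedness there says nothing about limit points on $\partial M$. The fact that the data tend to zero near the boundary does not produce a single interior point of $Z$.
\end{itemize}
Either way, the missing step is moving the zero data from a boundary point into the interior. That is exactly the first segment of your Step 3; alternatively, you could prove openness of $Z$ at boundary points using a collar. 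Since your main argument already supplies this step, the proof stands; just drop the alternative or repair it in this way.
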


An immediate consequence is the following uniqueness result.

\begin{corollary}\label{cor:boundary-determines}
An element of $\ker(S_g^*)$ is uniquely determined by its boundary data on $\partial M$.
\end{corollary}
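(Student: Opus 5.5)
The plan is to reduce the statement to Lemma~\ref{lem:unique-solution} by linearity. Suppose $(V_1,\alpha_1,\tau_1)$ and $(V_2,\alpha_2,\tau_2)$ both lie in $\ker(S_g^*)$ and have the same boundary data, meaning $V_1=V_2$ and $\partial V_1/\partial\nu=\partial V_2/\partial\nu$ on $\partial M$. I will interpret ``boundary data'' in this sense, and include the constants if needed. Since $S_g^*$ is linear, the difference $(W,a,t)=(V_1-V_2,\alpha_1-\alpha_2,\tau_1-\tau_2)$ also lies in $\ker(S_g^*)$, with $W=\partial W/\partial\nu=0$ on $\partial M$.

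\textbf{Step 1: the constants vanish.} From the boundary equation of \eqref{eq:main-system}, $(\partial W/\partial\nu)g-W\Pi_g=t\,g$ on $\partial M$, so $t=0$ as soon as $\partial M\neq\emptyset$. The interior constant $a$ takes more care. By the trace identity \eqref{eq:trace-system}, $\Delta W=-\frac{R_g}{n-1}W-\frac{an}{n-1}$. On $\partial M$ both $W$ and its normal derivative vanish, so all first derivatives of $W$ vanish there. The tangential Hessian $\nabla^2W(X,Y)=X Y W-(\nabla_XY)W$ therefore also vanishes on $\partial M$.

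Now evaluate the interior equation $-(\Delta W)g+\nabla^2W-W\operatorname{Ric}=a\,g$ on tangential pairs $(X,X)$ with $|X|=1$ at a boundary point. This gives $-\Delta W=a$ there. Taking the full trace at the same point, where $W=0$, gives $-(n-1)\Delta W=na$. Hence $(n-1)a=na$, so $a=0$.

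\textbf{Step 2: apply the lemma.} With $a=t=0$, $W$ solves $L_g^*W=0$ in $M$ with $W=\partial W/\partial\nu=0$ on $\partial M$. Lemma~\ref{lem:unique-solution} then forces $W\equiv0$, so $V_1=V_2$ and the constants agree.

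\textbf{Main obstacle.} The delicate point is Step~1, forcing $a=0$ from boundary vanishing. The argument above uses the structure of the equation on tangential directions. If one instead regards the constants $(\alpha,\tau)$ as part of the boundary data, this step becomes unnecessary.
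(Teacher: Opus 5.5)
Your proof is correct. Its skeleton is the paper's: subtract, use linearity of $S_g^*$, and invoke Lemma~\ref{lem:unique-solution}. The difference is how you read ``boundary data'', and that changes which step carries the content.

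\textbf{The paper's route.} The paper tacitly keeps the constants fixed; it writes $\alpha g-\alpha g$ and $\tau-\tau$. It assumes only the Dirichlet condition $V_1=V_2$ on $\partial M$. It then gets $\partial V/\partial\nu=0$ for free from the traced boundary condition $\partial V/\partial\nu=\frac{H}{n-1}V+\tau$ in \eqref{eq:trace-system}. So for fixed $(\alpha,\tau)$, the Neumann data you assume are redundant.

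\textbf{Your route.} You assume the full Cauchy data agree and let the constants differ. Your Step~1 correctly shows they cannot differ. The equality $t=0$ is read off the boundary equation. For $a$: $dW$, and hence the tangential Hessian, vanishes on $\partial M$. A tangential component of the interior equation then gives $-\Delta W=a$, and its trace gives $-(n-1)\Delta W=na$, so $a=0$.

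\textbf{What each buys.} The paper's version uses less boundary information but gives uniqueness only within a fixed $(\alpha,\tau)$. Yours needs more boundary information but recovers the whole triple. The paper's restriction is genuinely needed. On the Euclidean unit ball, $W=c(1-|x|^2)$ vanishes on $\partial M$, yet $(W,2(n-1)c,-2c)\in\ker(S_g^*)$. So the Dirichlet trace alone does not determine an element when the constants are free.

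Combining the two arguments gives a slightly sharper statement than either: $V|_{\partial M}$ together with $\tau$ already determines $(V,\alpha,\tau)$. Indeed, these two fix $\partial V/\partial\nu$ through the Robin condition, and your Step~1 then forces $\alpha$.
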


\begin{proof}
Let $V_1, V_2 \in \ker(S_g^*)$ with $V_1 = V_2$ on $\partial M$, and set $V = V_1 - V_2$. 
Then $V$ satisfies
$$
\begin{cases} 
L_g^* V = L_g^* V_1 - L_g^* V_2 = \alpha g - \alpha g = 0 & \text{in } M, \\[4pt]
\dfrac{\partial V}{\partial \nu} = \dfrac{\partial V_1}{\partial \nu} - \dfrac{\partial V_2}{\partial \nu} 
= \dfrac{H}{n-1} V_1 + \tau - \dfrac{H}{n-1} V_2 - \tau 
= \dfrac{H}{n-1} (V_1 - V_2) = 0 & \text{on } \partial M,
\end{cases}
$$
where we used the boundary condition from \eqref{eq:trace-system} together with the hypothesis $V_1 = V_2$ on $\partial M$.
Thus $V =\partial V/\partial \nu = 0$ on $\partial M$. By Lemma~\ref{lem:unique-solution}, $V \equiv 0$ in $M$; i.e., $V_1 = V_2$.
\end{proof}





\section{Examples of $V$-static-type equation}\label{example}

Here, we present some examples of the $V$-static-type equation (see \cite{corvino2013deformation, cruz2023critical, sheng2022deformations}).  

\begin{example}
\begin{enumerate}
\item To begin, let $(\Omega,\Sigma)$ be a domain in $(M^n,g)$, $n\geq 3$, where $\Sigma$ is the boundary of $\Omega$. We say that generic conditions are satisfied on $(\Omega,\Sigma)$ when the following system of equations  

\begin{equation}\label{eq:main-systemb}
\left\{
\begin{array}{rccl}
-(\Delta_g V)g + \nabla^2_g V - V\operatorname{Ric}_g & = & \alpha g, & \text{in } \Omega,\\[0.3cm]
\displaystyle\dfrac{\partial V}{\partial \nu} \, g - V \Pi_g & = & \tau g, & \text{on } \Sigma,
\end{array}
\right.
\end{equation}
has only the trivial solution.

We now describe non-generic domains in the Schwarzschild manifold, in the case $\alpha =0$ (see \cite{sheng2022deformations}) with  

\begin{equation}\label{eq:tau-schwarzschild}
\tau=\dfrac{-\dfrac{m^2}{4r^{n-2}}+(n-1)m-r^{n-2}}{r^{n-1}\left(1+\dfrac{m}{2r^{n-2}}\right)^{2+\frac{2}{n-2}}}.
\end{equation}
and 
\begin{equation}\label{VS}
V=\dfrac{1-\frac{m}{2r^{n-2}}}{1+\frac{m}{2r^{n-2}}}.
\end{equation}

Consider the Schwarzschild metric  

$$
g^S=\left(1+\dfrac{m}{2r^{n-2}}\right)^{\frac{4}{n-2}}\delta
$$
on $\mathbb{R}^n\setminus\{0\}$, where $r=|x|$. Let $\Sigma$ be the Euclidean sphere centered at $0$ with radius $r$ satisfying \eqref{eq:tau-schwarzschild}. The $n$-dimensional Schwarzschild manifold is conformal to $\mathbb{R}^n$.
We claim that the region with boundary $\Sigma$ is a non-generic domain in the Schwarzschild manifold with $\alpha=0$, where $\tau$ and $V$ are given by \eqref{eq:tau-schwarzschild} and \eqref{VS}, respectively. 

In fact, as mentioned in \cite{sheng2022deformations}, the function  defined in  \eqref{VS} $V\in\ker{L_g^*}$, and indeed it is the only possible static potential on connected open domains, as found by Corvino (\cite{corvino2013deformation}). We now verify the boundary condition on Euclidean spheres centered at $0$.  

Let $e$ be a unit vector in the Euclidean norm. Parametrize the Euclidean sphere as $\{x=re\}$. The outward unit normal at $x$ is $\nu=c e$, where  
$$
\nu=\left(1+\dfrac{m}{2r^{n-2}}\right)^{-\frac{2}{n-2}}e.
$$
Since $g^S$ is conformal to the Euclidean metric $\delta$, the mean curvature with respect to this normal is  

\begin{align*}
H&=\left(1+\dfrac{m}{2r^{n-2}}\right)^{-\frac{2}{n-2}}
\Bigl(H_0+(n-1)\Bigl\langle\nabla\Bigl(\dfrac{2}{n-2}
\ln\Bigl(1+\dfrac{m}{2r^{n-2}}\Bigr)\Bigr),e\Bigr\rangle_{\delta}\Bigr)\\
&=\left(1+\dfrac{m}{2r^{n-2}}\right)^{-\frac{2}{n-2}}
\Bigl(\dfrac{n-1}{r}-(n-1)\Bigl\langle\dfrac{mx}{r^n\Bigl(1+\dfrac{m}{2r^{n-2}}\Bigr)},e\Bigr\rangle_{\delta}\Bigr)\\
&=(n-1)\left(1+\dfrac{m}{2r^{n-2}}\right)^{-\frac{2}{n-2}}
\left(\dfrac{1-\dfrac{m}{2r^{n-2}}}{r\Bigl(1+\dfrac{m}{2r^{n-2}}\Bigr)}\right)\\
&=(n-1)\left(1+\dfrac{m}{2r^{n-2}}\right)^{-\frac{2}{n-2}} r^{-1}V.
\end{align*}
On the other hand,  

\begin{equation}\label{eq:boundary-cond}
V_{\nu}-\dfrac{H}{n-1}V
=\dfrac{-\dfrac{m^2}{4r^{n-2}}+(n-1)m-r^{n-2}}
{r^{n-1}\left(1+\dfrac{m}{2r^{n-2}}\right)^{2+\frac{2}{n-2}}}
=\tau.
\end{equation}

Moreover, note that $\Sigma$ is minimal with respect to $g^S$ if and only if $V=0$, i.e., $r=\left(\frac{m}{2}\right)^{\frac{1}{n-2}}$. In particular, if $\Sigma$ is a Euclidean sphere centered at $0$ with radius  

$$
r_{\pm}=\left(\dfrac{(n-1)\pm\sqrt{n^2-2n}}{2}m\right)^{\frac{1}{n-2}},
$$

then $\tau=0$.

\item (\cite{cruz2023critical}) Let $\Omega$ be a geodesic ball in $\mathbb{S}^n$ centered at $N=(0,\ldots,0,1)$ with geodesic radius $R\in(0,\pi)$. Let $r$ be the geodesic distance to $N$. The function  
$$
V=a\cos(r)-\dfrac{\alpha}{n-1},
$$
with  
$$
\tau=a\dfrac{\cos(2R)}{\sin R}-\dfrac{\alpha}{n-1}\dfrac{\cos R}{\sin R},
\quad a\in\mathbb{R},
$$
satisfies the system \eqref{eq:main-system}.

\item (\cite{cruz2023critical}) Let $\mathbb{B}$ be the unit Euclidean ball in $\mathbb{R}^n$. The function  

$$
V=-\dfrac{\alpha}{2(n-1)}|x|^2+\langle b, x\rangle-\tau,
$$

where $\alpha,\tau\in\mathbb{R}$ and $b\in\mathbb{R}^n$, is a solution for system \eqref{eq:main-system}.

\item (\cite{cruz2023critical}) Every Ricci-flat metric on $M$ with totally geodesic boundary satisfies \eqref{eq:main-system} with $\alpha=\tau=0$. The space of potential functions is generated by the constant function $1$. Such a metric cannot be critical for the volume and area functionals, since scaling the metric preserves Ricci-flatness and total geodesicity of the boundary.

\item Let $(M^n, g, V)$, $n \geq 3$, be an $n$-dimensional connected compact oriented critical metric with connected boundary $\partial M$. Taking $\alpha=1$ and $\tau=-\dfrac{1}{|\nabla V|}$, we have that $V$ satisfies \eqref{eq:main-system}, because $V=0$ on $\partial M$.
\end{enumerate}
\end{example}

\section{Proof of main results}\label{proofs}
In this section, we present the proofs of Theorems \ref{QXRic}, \ref{QXR} and \ref{generalizedAmb} and Corollaries \ref{cordne}, \ref{CQXRic} and \ref{cordne2}.

\begin{lemma}\label{lemasol}  
Let $(M^n, g)$, $n \geq 3$, be an $n$-dimensional connected compact oriented manifold with boundary. Suppose that $V$ satisfies \eqref{eq:main-system}. Then we have:
\begin{equation}\label{eq:key-identity}
\begin{aligned}
&-\Bigg\{ \int_{\partial M} V\Big[ 2\sigma u\big(\Delta_{\partial M}u + (n-1)ku\big) + H\sigma^2 u^2 + \frac{2H}{n-1}|\nabla_{\partial M}u|^2 - Hku^2 \Big]\, ds\\
&\qquad + \tau \int_{\partial M} \big(|\nabla_{\partial M}u|^2 - (n-1)ku^2\big)\, ds \Bigg\} \\
&= \int_{M} V|\nabla_g^2 u + ku\, g|^2\, dv + 2\int_{M} V\big(\operatorname{Ric}_g - (n-1)k\, g\big)(\nabla u,\nabla u)\, dv \\
&\quad - k\big(R_g - (n-1)nk\big)\int_M V u^2\, dv + \alpha\int_M |\nabla u|^2\, dv - \alpha kn\int_M u^2\, dv,
\end{aligned}
\end{equation}
where $u$ is a solution of \eqref{usol}.
\end{lemma}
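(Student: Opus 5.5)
The plan is to specialize the generalized Reilly formula of Qiu and Xia (Proposition~\ref{QuiXiaIden}) to the function $u$ solving \eqref{usol}, with the weight $V$ taken to be the solution of \eqref{eq:main-system}. Every $V$-dependent coefficient in that identity will then be replaced using the traced system \eqref{eq:trace-system} and the boundary structure of Proposition~\ref{propertiees}. No new analytic input should be needed: the proof is a term-by-term substitution followed by a rearrangement.

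First, the interior terms. Since $\Delta_g u + nku = 0$, the left-hand side of Proposition~\ref{QuiXiaIden} reduces to $-\int_M V|\nabla_g^2u+kug|^2\,dv$.

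For the zeroth-order term I substitute $\Delta_g V = -\frac{R_g}{n-1}V - \frac{\alpha n}{n-1}$ from \eqref{eq:trace-system}. Using that $R_g$ is constant (Proposition~\ref{propertiees}(a)), this gives
\begin{equation*}
(n-1)k\int_M(\Delta_gV+nkV)u^2\,dv = -k\big(R_g-n(n-1)k\big)\int_M Vu^2\,dv - \alpha nk\int_M u^2\,dv .
\end{equation*}

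For the gradient term I use the interior equation of \eqref{eq:main-system} in the form $\nabla_g^2V-(\Delta_gV)g = \alpha g + V\operatorname{Ric}_g$. The tensor in the Qiu--Xia formula then becomes $\alpha g + 2V\big(\operatorname{Ric}_g-(n-1)kg\big)$. This produces $\alpha\int_M|\nabla u|^2\,dv + 2\int_M V(\operatorname{Ric}_g-(n-1)kg)(\nabla u,\nabla u)\,dv$.

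Next, the boundary terms. I insert $\partial u/\partial\nu=\sigma u$ and $\partial V/\partial\nu = \frac{H}{n-1}V+\tau$ from \eqref{eq:trace-system}, together with $\Pi=\frac{H}{n-1}g$ and $H$ constant (Proposition~\ref{propertiees}(b)). The first boundary integrand becomes
\begin{equation*}
V\Big[2\sigma u\,\Delta_{\partial M}u + H\sigma^2u^2 + \tfrac{H}{n-1}|\nabla_{\partial M}u|^2 + 2(n-1)k\sigma u^2\Big].
\end{equation*}
The second boundary integral splits into two pieces: $\frac{H}{n-1}V\big(|\nabla_{\partial M}u|^2-(n-1)ku^2\big)$ and $\tau\big(|\nabla_{\partial M}u|^2-(n-1)ku^2\big)$. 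Collecting the $V$-weighted pieces gives exactly
\begin{equation*}
V\Big[2\sigma u\big(\Delta_{\partial M}u+(n-1)ku\big)+H\sigma^2u^2+\tfrac{2H}{n-1}|\nabla_{\partial M}u|^2-Hku^2\Big],
\end{equation*}
and the $\tau$ piece is as stated.

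Finally, I move the boundary integrals to the left and $-\int_M V|\nabla^2_gu+kug|^2\,dv$ to the right. This yields \eqref{eq:key-identity}.

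The main obstacle is justifying the umbilicity $\Pi = \frac{H}{n-1}g$ with $H$ constant on all of $\partial M$. Proposition~\ref{propertiees} is stated under the hypothesis that $V^{-1}(0)$ is nonempty, while the lemma does not assume this. My first attempt is to use the traceless system \eqref{eq:traceless-system}, namely $V\mathring{\Pi}=0$. Where $V\neq0$ this forces $\mathring\Pi=0$. On the zero set, $V$ contributes nothing to the $V\Pi$ term, and the $\partial V/\partial\nu$ term already uses only the trace relation. So the boundary integrand is correct pointwise wherever needed, since only $V\Pi(\nabla u,\nabla u)=V\frac{H}{n-1}|\nabla u|^2$ enters, with $H$ the mean curvature function. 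For constancy of $H$, I will invoke Proposition~2.2 of \cite{cruz2023critical} via Proposition~\ref{propertiees}(b).
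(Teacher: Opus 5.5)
Your proposal is correct and follows the paper's proof: insert $\Delta u+nku=0$, $\frac{\partial u}{\partial\nu}=\sigma u$, and the traced and traceless forms of \eqref{eq:main-system} into the Qiu--Xia identity, then rearrange. The worry at the end is unnecessary, because $V\Pi=\big(\frac{\partial V}{\partial\nu}-\tau\big)g=\frac{H}{n-1}Vg$ holds pointwise from the boundary equation itself, and $H$ stays inside the boundary integrals. Hence constancy of $H$, and with it the hypothesis $\Sigma\neq\emptyset$ of Proposition~\ref{propertiees}, is never used; the only constancy needed is that of $R_g$, which holds for any nontrivial solution (the divergence of the interior equation gives $V\,dR_g=0$).
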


\begin{proof}
From the Qiu--Xia identity, see Proposition \ref{QuiXiaIden}, together with \eqref{eq:main-system}, we obtain
\begin{equation}\label{eqqx0}
\begin{aligned}
\int_{M} V\big[(\Delta_g u + nku)^2 &- |\nabla_g^2 u + ku\, g|^2\big]\, dv \\
&= 2\int_{M} V\big(\operatorname{Ric}_g - (n-1)k\, g\big)(\nabla u,\nabla u)\, dv \\
&\quad - k\big(R_g - (n-1)nk\big)\int_M V u^2\, dv + \alpha\int_M |\nabla u|^2\, dv - \alpha kn\int_M u^2\, dv \\
&\quad + \int_{\partial M} V\Big[ 2\frac{\partial u}{\partial\nu}\,\Delta_{\partial M}u + H\left(\frac{\partial u}{\partial\nu}\right)^2 + \Pi(\nabla_{\partial M} u, \nabla_{\partial M} u) \\
&\qquad + 2(n-1)k \frac{\partial u}{\partial\nu}\, u \Big] ds \\
&\quad + \int_{\partial M} \frac{\partial V}{\partial\nu} \big(|\nabla_{\partial M}u|^2 - (n-1)ku^2\big)\, ds .
\end{aligned}
\end{equation}
Substituting \eqref{usol}, \eqref{eq:trace-system} and \eqref{eq:traceless-system} into \eqref{eqqx0} and rearranging terms, we obtain
\begin{equation}\label{eq000}
\begin{aligned}
-\int_{M} V|\nabla_g^2 u + ku\, g|^2\, dv
&= 2\int_{M} V\big(\operatorname{Ric}_g - (n-1)k\, g\big)(\nabla u,\nabla u)\, dv \\
&\quad - k\big(R_g - (n-1)nk\big)\int_M V u^2\, dv + \alpha\int_M |\nabla u|^2\, dv - \alpha kn\int_M u^2\, dv \\
&\quad + \int_{\partial M} V\Big[ 2\sigma u\,\Delta_{\partial M}u + H\sigma^2 u^2 + \frac{H}{n-1}|\nabla_{\partial M}u|^2 \\
&\qquad + 2(n-1)k\sigma u^2 \Big] ds \\
&\quad + \int_{\partial M} \left(\frac{H}{n-1}V + \tau\right)\big(|\nabla_{\partial M}u|^2 - (n-1)ku^2\big)\, ds .
\end{aligned}
\end{equation}
Rearranging terms, we obtain the identity \eqref{eq:key-identity}.
\end{proof}

\subsection{Proof of Theorem \ref{QXRic}}
\begin{proof}
By Lemma \ref{lemasol} together with the assumption $\operatorname{Ric}_g \geq (n-1)k g$, we obtain
\begin{equation}\label{eqn002}
\begin{aligned}
&-\Bigg\{ \int_{\partial M} V\Big[ 2\sigma u\big(\Delta_{\partial M}u + (n-1)ku\big) + H\sigma^2 u^2 + \frac{2H}{n-1}|\nabla_{\partial M}u|^2 - Hku^2 \Big]\, ds \\
&\qquad + \tau \int_{\partial M} \big(|\nabla_{\partial M}u|^2 - (n-1)ku^2\big)\, ds \Bigg\} \\
&\geq -k\big(R_g - (n-1)nk\big)\int_{M} V u^2\, dv + \alpha\int_M |\nabla u|^2\, dS_g - \alpha kn\int_M u^2\, dv .
\end{aligned}
\end{equation}
Now, integrating the identity $\operatorname{div}(\alpha u\nabla u) = \alpha u\Delta u + \alpha|\nabla u|^2$ over $M$ yields
\begin{equation}\label{eq:div-identity}
\alpha\int_{\partial M} u\frac{\partial u}{\partial\nu}\, ds = \alpha\int_M u\Delta u\, dv + \alpha\int_M |\nabla u|^2\, dv .
\end{equation}
Substituting \eqref{usol} into \eqref{eq:div-identity}, we obtain
\begin{equation}\label{eq:alpha-identity}
\alpha\sigma\int_{\partial M} u^2\, ds = -\alpha nk\int_M u^2\, dv + \alpha\int_M |\nabla u|^2\, dv .
\end{equation}

Combining \eqref{eqn002} with \eqref{eq:alpha-identity} gives
\begin{equation}\label{eq:main-inequality}
\begin{aligned}
&-\int_{\partial M} V\Big[ 2\sigma u\big(\Delta_{\partial M}u + (n-1)ku\big) + H\sigma^2 u^2 + \frac{2H}{n-1}|\nabla_{\partial M}u|^2 - Hku^2 \Big]\, ds \\
&\quad -\tau\int_{\partial M} \big(|\nabla_{\partial M}u|^2 - (n-1)ku^2\big)\, ds - \alpha\sigma\int_{\partial M} u^2\, ds \\
&\geq -k\big(R_g - (n-1)nk\big)\int_{M} V u^2\, dv .
\end{aligned}
\end{equation}

Hence, equality holds in \eqref{eq:main-inequality} if and only if
\begin{equation}\label{eq:equality-conds}
\operatorname{Ric}_g(\nabla u, \nabla u) = (n-1)k|\nabla u|^2 \quad \text{and} \quad \nabla^2 u + u k g = 0.
\end{equation}

On the other hand, using the decomposition
$$
\Delta u = \Delta_{\partial M} u + H\frac{\partial u}{\partial\nu} + \nabla^2 u(\nu,\nu)
$$
together with \eqref{usol} and \eqref{eq:equality-conds}, we deduce
\begin{equation}\label{eq:boundary-rel}
-H\sigma u = \Delta_{\partial M} u + (n-1)k u .
\end{equation}
Integrating \eqref{eq:boundary-rel} over $\partial M$ gives
$$
\int_{\partial M} u\, ds = 0 \quad \text{or} \quad -H\sigma = (n-1)k .
$$
Moreover, from \eqref{eq:boundary-rel} we obtain
$$
\int_{\partial M} |\nabla_{\partial M} u|^2\, ds = \big((n-1)k + \sigma H\big)\int_{\partial M} u^2\, ds .
$$
In particular, $\sigma H \geq -(n-1)k$. If equality holds, then $u$ is constant on $\partial M$. Since $u$ satisfies \eqref{usol}, this implies either $\sigma = 0$ or $u \equiv 0$ on $\partial M$. 
\end{proof}

\subsection{Proof of Corollary \ref{cordne}} 
\begin{proof}
Since $V = 0$ on $\partial M$ and under the hypothesis of Theorem \ref{QXRic}, we obtain 
\begin{equation}\label{eq:cor-base}
-\tau\int_{\partial M} \big(|\nabla_{\partial M}u|^2 - (n-1)ku^2\big)\, ds - \alpha\sigma\int_{\partial M} u^2\, ds_g \geq -k\big(R_g - (n-1)nk\big)\int_{M} V u^2\, dv .
\end{equation}
Using that $(M^n, g, V)$ is a critical metric, we then have $\alpha = 1$.  
Let $\nu = -\frac{\nabla V}{|\nabla V|}$ be the outward unit normal to $\partial M$ and $H = \frac{1}{|\nabla V|}$ the corresponding mean curvature. A direct computation gives
$$
\frac{\partial V}{\partial\nu} = -|\nabla V| = -\frac{1}{H} = \tau .
$$

Using these facts together with \eqref{usol} and \eqref{eq:cor-base}, we obtain
$$
\sigma\int_{\partial M} u^2\, ds = \int_{\partial M} u\langle\nabla u,\nu\rangle\, ds = -\frac{1}{|\nabla V|}\int_{\partial M} u\langle\nabla u,\nabla V\rangle\, ds = -H\int_{\partial M} u\langle\nabla u,\nabla V\rangle\, ds .
$$
Substituting this into \eqref{eq:cor-base} yields
\begin{equation}\label{den}
\frac{1}{H}\int_{\partial M} \Big[ |\nabla_{\partial M}u|^2 - (n-1)k u^2 + H^2 u\langle\nabla u,\nabla V\rangle \Big] ds \geq -k\big(R_g - n(n-1)k\big)\int_M V u^2\, dv.
\end{equation}
Thus, \eqref{den} is established.

We now consider the equality case. By assumption, $R_g \geq n(n-1)k$ and $k \leq 0$. Therefore \eqref{den} gives us
\begin{equation}\label{eq:cor-final}
\frac{1}{H}\int_{\partial M} \Big[ |\nabla_{\partial M}u|^2 - (n-1)k u^2 + H^2 u\langle\nabla u,\nabla V\rangle \Big] ds \geq 0 .
\end{equation}
Hence, \eqref{den1} is proved.

Observe that equality holds in \eqref{eq:cor-final} if and only if
$$
\operatorname{Ric}_g(\nabla u, \nabla u) = (n-1)k|\nabla u|^2, \quad R_g = n(n-1)k, \quad \text{and} \quad \nabla_g^2 u + kug = 0 .
$$
Since $\operatorname{Ric} \geq (n-1)k g$ and $R_g = (n-1)nk$, we conclude that $(M^n, g)$ is Einstein.  
Finally, applying [\cite{miao2011einstein}, Theorem 1.1] we deduce that $(M^n, g)$ is isometric to a geodesic ball in a simply connected space form, either hyperbolic space $\mathbb{H}^n$ or Euclidean space $\mathbb{R}^n$.
\end{proof}

\subsection{Proof of Corollary \ref{CQXRic}}
\begin{proof}
This is an immediate consequence of Theorem \ref{QXRic} by taking $\alpha=\tau=0$. 
\end{proof}

\subsection{Proof of Theorem \ref{QXR}}
\begin{proof}
We follow the argument of Theorem 2 in \cite{diogenes2025integral}, though the situation is slightly more complicated because \(V = 0\) does not hold on \(\partial M\). First, using Lemma \ref{lemasol} together with \eqref{eq:main-system}, we obtain

\begin{eqnarray}\label{eq002}
&-&\displaystyle\int_{\partial M} V\Big[ 2\sigma u\big(\Delta_{\partial M}u + (n-1)ku\big) + H\sigma^2 u^2 + \frac{2H}{n-1}|\nabla_{\partial M}u|^2 - Hku^2 \Big]\, ds\nonumber \\
&\quad& -\tau \int_{\partial M} \big(|\nabla_{\partial M}u|^2 - (n-1)ku^2\big)\, ds\nonumber \\
&=&\displaystyle\int_M V|\nabla^2 u + ku g|^2\, dv + 2\int_M (-\Delta_g V\, g + \nabla^2 V)(\nabla u, \nabla u)\, dv - \alpha\int_M |\nabla u|^2\, dv\nonumber\\
&\quad& - 2(n-1)k\int_M V|\nabla u|^2\, dv - k\big(R_g - (n-1)nk\big)\int_M V u^2\, dv - \alpha kn\int_M u^2\, dv .
\end{eqnarray}

Note that  
\[
\nabla_g^2 V(\nabla u, \nabla u) = \operatorname{div}\!\big(\langle\nabla V,\nabla u\rangle\nabla u\big) - \Delta_g u\,\langle\nabla V,\nabla u\rangle - \nabla_g^2 u(\nabla u,\nabla V).
\]
Integrating this identity and using \eqref{usol} gives

\begin{equation}\label{eqr000}
\int_M \nabla_g^2 V(\nabla u, \nabla u)\, dv = \sigma\int_{\partial M} \langle\nabla V,\nabla u\rangle u\, ds + nk\int_M  u\,\langle\nabla V,\nabla u\rangle\, dv - \frac12\int_M \langle\nabla|\nabla u|^2,\nabla V\rangle\, dv.
\end{equation}

Since \(\operatorname{div}\!\big(|\nabla u|^2\nabla V\big) = \langle\nabla|\nabla u|^2,\nabla V\rangle + |\nabla u|^2\Delta_g V\), integration yields

\begin{equation}\label{eqr001}
\frac12\int_{\partial M} \langle|\nabla u|^2\nabla V,\nu\rangle\, ds = \frac12\int_M \langle\nabla|\nabla u|^2,\nabla V\rangle\, dv + \frac12\int_M |\nabla u|^2\Delta_g V\, dv .
\end{equation}

Combining \eqref{eqr000} with \eqref{eqr001} we infer

\begin{equation}
\begin{aligned}
\int_M \nabla_g^2 V(\nabla u, \nabla u)\, dv &= \sigma\int_{\partial M} \langle\nabla V,\nabla u\rangle u\, ds_g + nk\int_M \langle\nabla V,\nabla u\rangle u\, dv+ \frac12\int_M |\nabla u|^2\Delta_g V\, dv \\
&\quad - \frac12\int_{\partial M} \langle|\nabla u|^2\nabla V,\nu\rangle\, ds .
\end{aligned}
\end{equation}

Consequently,

\begin{equation}\label{eqr002}
\begin{aligned}
\int_M \big[-\Delta_g V\, g + \nabla_g^2 V\big](\nabla u,\nabla u)\, dv &= \sigma\int_{\partial M} \langle\nabla V,\nabla u\rangle u\, ds + nk\int_M \langle\nabla V,\nabla u\rangle u\, dv \\
&\quad - \frac12\int_M |\nabla u|^2\Delta_g V\, dv - \frac12\int_{\partial M} |\nabla u|^2\frac{\partial V}{\partial\nu}\, ds .
\end{aligned}
\end{equation}

From \eqref{eq002} and \eqref{eqr002} we obtain

\begin{equation}\label{eqr003}
\begin{aligned}
&-\int_{\partial M} V\Big[ 2\sigma u\big(\Delta_{\partial M}u + (n-1)ku\big) + H\sigma^2 u^2 + \frac{2H}{n-1}|\nabla_{\partial M}u|^2 - Hku^2 \Big]\, ds \\
&\quad -\tau \int_{\partial M} \big(|\nabla_{\partial M}u|^2 - (n-1)ku^2\big)\, ds \\
&= \int_M V|\nabla_g^2 u + ku g|^2\, dv + 2\sigma\int_{\partial M} \langle\nabla V,\nabla u\rangle u\, ds + 2nk\int_M \langle\nabla V,\nabla u\rangle u\, dv \\
&\quad - \int_M |\nabla u|^2\Delta_g V\, dv - \int_{\partial M} |\nabla u|^2\frac{\partial V}{\partial\nu}\, ds - \alpha\int_M |\nabla u|^2\, dv \\
&\quad - 2(n-1)k\int_M V|\nabla u|^2\, dv - k\big(R_g - (n-1)nk\big)\int_M V u^2\, dv - \alpha kn\int_M u^2\, dv .
\end{aligned}
\end{equation}

Substituting \eqref{eq:trace-system} into \eqref{eqr003} gives

\begin{equation}\label{eqr005}
\begin{aligned}
&-\int_{\partial M} V\Big[ 2\sigma u\big(\Delta_{\partial M}u + (n-1)ku\big) + H\sigma^2 u^2 + \frac{2H}{n-1}|\nabla_{\partial M}u|^2 - Hku^2 \Big]\, ds \\
&\quad -\tau \int_{\partial M} \big(|\nabla_{\partial M}u|^2 - (n-1)ku^2\big)\, ds \\
&= \int_M V|\nabla_g^2 u + ku g|^2\, dv - k\big(R_g - (n-1)nk\big)\int_M V u^2\, dv + \frac{\alpha}{n-1}\int_M |\nabla u|^2\, dv - \alpha nk\int_M u^2\, dv \\
&\quad + 2nk\int_M u\langle\nabla V,\nabla u\rangle\, dv + \frac{1}{n-1}\big(R_g - 2(n-1)^2k\big)\int_M V|\nabla u|^2\, dv \\
&\quad + 2\sigma\int_{\partial M} \langle\nabla V,\nabla u\rangle u\, ds - \frac{H}{n-1}\int_{\partial M} V|\nabla u|^2\, ds - \tau\int_{\partial M} |\nabla u|^2\, ds.
\end{aligned}
\end{equation}

Observe that  
\[
\operatorname{div}(Vu\nabla u) = \langle\nabla(Vu),\nabla u\rangle + Vu\Delta u = V|\nabla u|^2 + u\langle\nabla V,\nabla u\rangle - nkVu^2,
\]
where we have used \(\Delta u = -nku\). Integrating and using \eqref{usol} yields

\begin{equation}\label{eq4}
\int_M u\langle\nabla V,\nabla u\rangle\, dv = \sigma\int_{\partial M} V u^2\, ds - \int_M V|\nabla u|^2\, dv + nk\int_M V u^2\, dv.
\end{equation}

Since \(\frac{\partial u}{\partial\nu} = \sigma u\), we have  
\(\nabla u = \nabla^{\partial M}u + \frac{\partial u}{\partial\nu}\nu = \nabla^{\partial M}u + \sigma u\nu\). Consequently,

\begin{equation}\label{intgrad}
\int_{\partial M} |\nabla u|^2\, ds = \int_{\partial M} |\nabla_{\partial M}u|^2\, ds + \sigma^2\int_{\partial M} u^2\, ds .
\end{equation}

Substituting \eqref{eq4} and \eqref{intgrad} into \eqref{eqr005} we obtain

\begin{equation}\label{eqr006}
\begin{aligned}
&-\int_{\partial M} V\Big[ 2\sigma u\big(\Delta_{\partial M}u + (n-1)ku\big) + \Big(\frac{n-2}{n-1}H\sigma^2 + 2nk\sigma - Hk\Big)u^2 + \frac{H}{n-1}|\nabla_{\partial M}u|^2 \Big]\, ds \\
&\quad + \tau\big(\sigma^2 + (n-1)k\big)\int_{\partial M} u^2\, ds - 2\sigma\int_{\partial M} \langle\nabla u,\nabla V\rangle u\, ds \\
&= \int_M V|\nabla_g^2 u + ku g|^2\, dv - k\big(R_g - (n-1)nk\big)\int_M V u^2\, dv + \frac{\alpha}{n-1}\int_M |\nabla u|^2\, dv - \alpha nk\int_M u^2\, dv \\
&\quad + 2n^2k^2\int_M V u^2\, dv + \frac{1}{n-1}\big(R_g - 2(n-1)(2n-1)k\big)\int_M V|\nabla u|^2\, dv .
\end{aligned}
\end{equation}

Thus, using the assumptions \(R_g \geq (n-1)nk\) and \(k \leq 0\) in \eqref{eqr006}, we deduce

\begin{equation}\label{eqri007}
\begin{aligned}
&-\int_{\partial M} V\Big[ 2\sigma u\big(\Delta_{\partial M}u + (n-1)ku\big) + \Big(\frac{n-2}{n-1}H\sigma^2 + 2nk\sigma - Hk\Big)u^2 + \frac{H}{n-1}|\nabla_{\partial M}u|^2 \Big]\, ds \\
&\quad + \tau\big(\sigma^2 + (n-1)k\big)\int_{\partial M} u^2\, ds - 2\sigma\int_{\partial M} \langle\nabla u,\nabla V\rangle u\, ds\\
&\geq \frac{\alpha}{n-1}\int_M |\nabla u|^2\, dv- \alpha nk\int_M u^2\, dv + 2n^2k^2\int_M V u^2\, dv - (3n-2)k\int_M V|\nabla u|^2\, dv.
\end{aligned}
\end{equation}

Finally, equality holds in the inequality \eqref{eqri007} if and only if \(R_g = n(n-1)k\) and \(\nabla^2 u + ku g = 0\). This completes the proof of the theorem.

\end{proof}

\subsection{Proof of Corollary \ref{cordne2}}
\begin{proof}
Since $(M^n,g)$ is a critical metric, we have $V = 0$ on $\partial M$ and $\alpha = 1$. Then, by Theorem \ref{QXR}, we obtain

\begin{equation}\label{eqr007}
\begin{aligned}
\tau\bigl(\sigma^2 + (n-1)k\bigr) \int_{\partial M} u^2 \, ds
&- 2\sigma \int_{\partial M} \langle \nabla u, \nabla V \rangle u \, ds \\
&\geq \frac{1}{n-1} \int_M |\nabla u|^2 \, dv - nk \int_M u^2 \, dv \\
&\quad + 2n^2k^2 \int_M V u^2 \, dv - (3n-2)k \int_M V|\nabla u|^2 \, dv.
\end{aligned}
\end{equation}

On the other hand, taking into account that $ \nu=-\frac{\nabla V}{|\nabla V|}$ is the outward unit normal to $\partial M$, $H = \frac{1}{|\nabla V|}$ is the mean curvature of $\partial M$ with respect to $\nu$, and $\dfrac{\partial u}{\partial \nu}=\sigma u$, one sees that 
$$\dfrac{\partial V}{\partial \nu}=-\dfrac{1}{H}:=\tau.$$
Moreover,
\begin{equation}\label{eq009}
\sigma u = \langle \nabla u, \nu \rangle = -\frac{1}{|\nabla V|}\langle \nabla u, \nabla V \rangle = -H \langle \nabla u, \nabla V \rangle,
\end{equation}
and consequently

\begin{equation}\label{eq010}
\tau\sigma^2 u^2 = -H \langle \nabla u, \nabla V \rangle^2.
\end{equation}

Substituting \eqref{eq009} and \eqref{eq010} into \eqref{eqr007} yields \eqref{eqn008}, which completes the proof.

\end{proof}

\subsection{Proof of Theorem \ref{generalizedAmb}}

\begin{proof}
To prove this result, we will use the following Pohozaev-type identity due to Schoen \cite{schoen1988existence}:

\begin{theorem} [\cite{schoen1988existence}]
\label{SchoenPoho} Let $(M^n, g)$ be a compact Riemannian manifold with boundary. If $X$ is a smooth vector field on $M$, then
$$
\frac{n-2}{2n} \int_M X(R_g) \, dv 
= -\frac{1}{2} \int_M \langle \mathcal{L}_{X} g, \mathring{\mathrm{Ric}}_g \rangle \, dv 
+ \int_{\partial M} \mathring{\mathrm{Ric}}_g(X, \nu) \, ds,
$$
where $\mathring{\mathrm{Ric}}$ denotes the trace‑free part of the Ricci tensor of $M$.
\end{theorem}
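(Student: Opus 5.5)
The plan is to derive the identity from the contracted second Bianchi identity and a single application of the divergence theorem to the vector field obtained by contracting $\mathring{\mathrm{Ric}}_g$ with $X$. No hypothesis on $X$ or on the boundary beyond smoothness and compactness should be needed.

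First I will compute the divergence of the trace-free Ricci tensor. Write $\mathring{\mathrm{Ric}}_g=\mathrm{Ric}_g-\frac{R_g}{n}g$. The contracted second Bianchi identity gives $\operatorname{div}_g\mathrm{Ric}_g=\frac12 dR_g$, and $\operatorname{div}_g(R_g g)=dR_g$. Therefore
\[
\operatorname{div}_g\mathring{\mathrm{Ric}}_g=\frac12 dR_g-\frac1n dR_g=\frac{n-2}{2n}\,dR_g .
\]

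Next, let $Y$ be the vector field metrically dual to the one-form $\mathring{\mathrm{Ric}}_g(X,\cdot)$. By the Leibniz rule,
\[
\operatorname{div}_g Y=(\operatorname{div}_g\mathring{\mathrm{Ric}}_g)(X)+\langle \nabla X,\mathring{\mathrm{Ric}}_g\rangle .
\]
Since $\mathring{\mathrm{Ric}}_g$ is symmetric, only the symmetric part of $\nabla X$ survives in the pairing. That symmetric part is $\frac12\mathcal{L}_X g$, so
\[
\operatorname{div}_g Y=\frac{n-2}{2n}X(R_g)+\frac12\langle\mathcal{L}_Xg,\mathring{\mathrm{Ric}}_g\rangle .
\]

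Finally, I integrate over $M$ and apply the divergence theorem with the outward unit normal $\nu$. This gives $\int_M\operatorname{div}_gY\,dv=\int_{\partial M}\mathring{\mathrm{Ric}}_g(X,\nu)\,ds$. Rearranging yields exactly the stated identity.

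There is no real obstacle here. The only points requiring care are the sign and normalization conventions: $\mathcal{L}_Xg=\nabla_iX_j+\nabla_jX_i$, which accounts for the factor $\frac12$, and the use of the outward normal, which fixes the sign of the boundary term.
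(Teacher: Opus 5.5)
Your proof is correct. The contracted Bianchi identity gives $\operatorname{div}_g\mathring{\mathrm{Ric}}_g=\frac{n-2}{2n}\,dR_g$. Combined with $\langle\nabla X,\mathring{\mathrm{Ric}}_g\rangle=\frac12\langle\mathcal{L}_Xg,\mathring{\mathrm{Ric}}_g\rangle$ and the divergence theorem applied to the dual of $\mathring{\mathrm{Ric}}_g(X,\cdot)$, this yields exactly the stated identity. The paper does not prove this result but quotes it from Schoen, and your argument is the standard derivation behind that citation. Since it uses only the divergence theorem, it also covers the paper's later application on $\Omega$, whose boundary $S\cup\Sigma$ has a corner along $\Gamma$.
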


Let $\Omega$ be a connected component of $M\setminus \Sigma$ and let $S$ denote the set $\partial\Omega\setminus\Sigma$. We will assume that $V>0$ on $\Omega$. We also denote by $\nu$ and $\xi$ the outward unit normal vector fields to $S$ and $\Sigma$, respectively. 

Since $R_g$ is constant, taking $X=\nabla^M V$ in Theorem \ref{SchoenPoho} yields
$$
\int_{\Omega} \langle \nabla^2 V, \mathring{\operatorname{Ric}}_g \rangle \, dv 
= \int_{\partial \Omega} \mathring{\operatorname{Ric}}_g(\nabla^M V, \nu) \, ds,
$$
where we have used that $\mathcal{L}_{\nabla^M V} g = 2 \nabla^2 V$. Indeed,
$$
(\mathcal{L}_{\nabla^M V} g)(Y, Z) 
= \langle \nabla_Y \nabla^M V, Z \rangle + \langle \nabla_Z \nabla^M V, Y \rangle 
= 2 \nabla^2 V(Y, Z),
$$
for all $Y,Z\in \mathfrak{X}(M)$. A direct computation shows that $\mathring{\nabla^2 V} = V \mathring{\operatorname{Ric}}_g$, which together with the decomposition
$$
\nabla^2 V = V \mathring{\operatorname{Ric}}_g + \frac{\Delta V}{3} g
$$
implies $\langle \nabla^2 V, \mathring{\operatorname{Ric}}_g \rangle = V |\mathring{\operatorname{Ric}}_g|^2$. Consequently,
\begin{equation}\label{eq:integral-identity}
\int_{\Omega} V |\mathring{\operatorname{Ric}}_g|^2 \, dv 
= \int_{S} \mathring{\operatorname{Ric}}_g(\nabla^M V, \nu) \, ds 
+ \int_{\Sigma} \mathring{\operatorname{Ric}}_g(\nabla^M V, \xi) \, ds,
\end{equation}
where $\xi = -\nabla^M V/|\nabla^M V|$ is the unit normal on $\Sigma=V^{-1}(0)$.

We observe that $\nabla^M V=\nabla^{\partial M}V+\dfrac{\partial V}{\partial\nu}\nu$ and by Proposition \ref{propertiees} item e), we infer $\operatorname{Ric}(X,\nu)=0$ for all $X\in\mathfrak{X}(\partial M)$. Thus, on $S$, we have 
\begin{eqnarray}\label{ricnunu}
\mathring{\operatorname{Ric}}_g(\nabla^M V, \nu) 
&=& \frac{\partial V}{\partial \nu} \mathring{\operatorname{Ric}}_g(\nu, \nu)\nonumber\\
&=& \frac{\partial V}{\partial \nu} \left( \operatorname{Ric}_g(\nu, \nu) - \frac{R_g}{3} \right)\nonumber\\
&=&  \frac{H}{2} V \left( \operatorname{Ric}_g(\nu, \nu) - \frac{R_g}{3} \right),
\end{eqnarray}
where we used the boundary condition from \eqref{eq:trace-system}.

Using \eqref{eq:main-system} and the identity 
$$
\Delta_g V = \Delta_S V + H\nu(V) + \nabla^2 V(\nu, \nu),
$$ 
we obtain
$$
V \operatorname{Ric}_g(\nu, \nu) = -\Delta_S V - H \nu(V) - \alpha.
$$
Substituting this into \eqref{ricnunu} gives
\begin{eqnarray}\label{rictracenu}
\mathring{\operatorname{Ric}}_g(\nabla^M V, \nu) 
= -\frac{H}{2} \left( \Delta_S V + H\nu(V) + \alpha + \frac{R_g V}{3} \right).
\end{eqnarray}

Note that on $\Sigma$, we have $\nabla^M V = -|\nabla^M V| \xi = -\beta \xi$, where $\beta = |\nabla^M V|$ is a positive constant. Thus,
\begin{eqnarray}\label{Ricnablaxi}
\mathring{\operatorname{Ric}}_g(\nabla^M V, \xi) 
= -\beta \mathring{\operatorname{Ric}}_g(\xi, \xi) = -\beta \left( \operatorname{Ric}_g(\xi, \xi) - \frac{R_g}{3} \right).
\end{eqnarray}

By Proposition \ref{propertiees}, we know that $\Sigma$ is totally umbilical, so $A^\Sigma = \dfrac{H_\Sigma}{2} g_\Sigma$, and the Gauss equation gives
$$
R_g - 2 \operatorname{Ric}_g(\xi, \xi) = 2K_\Sigma - \frac{1}{2} H_\Sigma^2,
$$
where $K_\Sigma$ is the sectional curvature of $\Sigma$ and $H_\Sigma = \dfrac{\alpha}{\beta}$. Consequently,
$$
\operatorname{Ric}_g(\xi, \xi) = -K_\Sigma + \frac{R_g}{2} + \frac{H_\Sigma^2}{4} 
= \frac{R_g}{2} - K_\Sigma + \frac{\alpha^2}{4\beta^2}.
$$
This, together with \eqref{Ricnablaxi}, yields
\begin{eqnarray}\label{eqricnaxi}
\int_{\Sigma} \mathring{\operatorname{Ric}}_g(\nabla^M V, \xi) \, ds
= \beta\int_{\Sigma}K_{\Sigma}\,ds- \dfrac{1}{6}R_g\beta |\Sigma| - \dfrac{\alpha^2}{4\beta}|\Sigma|,
\end{eqnarray}
where $|\Sigma|$ denotes the area of $\Sigma.$

Now, let $\Gamma = \Sigma \cap S$ be a parametrized curve. Its geodesic curvature in $\Sigma$ is given by
$$
k_\Gamma = \langle \nabla^\Sigma_{\Gamma'} \Gamma', \nu \rangle 
= A^S(\Gamma', \Gamma') - A^\Sigma(\Gamma', \Gamma') 
= \frac{H}{2} - \frac{H_\Sigma}{2} 
= \frac{H}{2} - \frac{\alpha}{2\beta}.
$$
In particular, we deduce that 
\begin{eqnarray}\label{H2}
\frac{H}{2} = k_\Gamma + \frac{\alpha}{2\beta}.
\end{eqnarray}
Integrating by parts, we obtain
$$
\frac{H}{2} \int_S \Delta_S V  \, ds
= \frac{H}{2} \int_{\Gamma} \frac{\partial V}{\partial \xi} \, ds 
= \frac{H}{2} \int_{\Gamma} \langle \nabla V, \xi \rangle \, ds 
= -\beta \int_{\Gamma} \frac{H}{2} \, ds.
$$
Plugging \eqref{H2} into the above expression yields
\begin{eqnarray}\label{H2delta}
\frac{H}{2} \int_S \Delta_S V  \, dv = -\beta \int_{\Gamma} k_\Gamma \, ds - \frac{\alpha}{2} L(\Gamma),
\end{eqnarray}
where $L(\Gamma)$ denotes the length of $\Gamma$.

Combining \eqref{eq:trace-system}, \eqref{rictracenu}, \eqref{H2delta} and integrating by parts, we obtain
\begin{eqnarray}\label{eqricnu}
\int_{S} \mathring{\operatorname{Ric}}_g(\nabla^M V, \nu) \, dv 
&=& \beta\displaystyle\int_{\Gamma}k_{\Gamma}\,ds+\dfrac{\alpha}{2}L(\Gamma)-\dfrac{H^2}{2}\left[\displaystyle\int_{\Omega}\Delta_MV\,dv-\displaystyle\int_{\Sigma}\dfrac{\partial V}{\partial \xi}ds\right]-\dfrac{\alpha H}{2}|S|-\dfrac{RH}{6}\displaystyle\int_SVds\nonumber\\
&=& \beta\displaystyle\int_{\Gamma}k_{\Gamma}\,ds+\dfrac{\alpha}{2}L(\Gamma)+\dfrac{R_gH^2}{4}\displaystyle\int_{\Omega}Vdv+\dfrac{3\alpha H^2}{4}|\Omega|-\beta|\Sigma|\dfrac{H^2}{2}\nonumber\\
&&-\dfrac{\alpha H}{2}|S|-\dfrac{RH}{6}\displaystyle\int_SVds.\nonumber\\
\end{eqnarray}

Finally, substituting \eqref{eqricnaxi} and \eqref{eqricnu} into \eqref{eq:integral-identity} and simplifying, we arrive at
\begin{align*}
\int_{\Omega} V |\mathring{\operatorname{Ric}}_g|^2 \, dv 
&= \beta \left[ 2\pi \chi(\Sigma) -  |\Sigma|\left(\frac{R_g}{6}+\dfrac{H^2}{2}+\dfrac{\alpha^2}{4\beta^2}\right) \right]  \\
&\quad  
+ \frac{\alpha}{2} \bigl( L(\Gamma) - H |S| +\dfrac{3}{2}H^2|\Omega|\bigr)+\dfrac{R_gH^2}{4}\displaystyle\int_{\Omega}Vdv-\dfrac{RH}{6}\displaystyle\int_SVds,
\end{align*}
where we have used the Gauss-Bonnet theorem, $\chi(\Sigma)$ is the Euler characteristic of $\Sigma$, and $L(\Gamma)$ denotes the length of $\Gamma = \Sigma \cap S$.
\end{proof}

Now we present results concerning the situation where the zero-level set of the potential does not intersect the boundary. These results are analogous to those in \cite{medvedev2024static}, Section 4.

\begin{lemma}\label{lem:component-contains-boundary}
Let $(M^n, g)$, $n \geq 3$, be an $n$-dimensional connected compact oriented manifold with boundary. Let $V$ be a function on $M$ satisfying \eqref{eq:main-system} with $R_g \leq 0$ and $\alpha\leq 0$. 
Assume that the zero set \(\Sigma = V^{-1}(0)\) is contained in the interior of \(M\), i.e., \(\Sigma \subset \operatorname{int}(M)\). 
Then every connected component of \(M \setminus \Sigma\) on which \(V > 0\) contains at least one boundary component of \(M\).
\end{lemma}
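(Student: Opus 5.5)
Suppose, for contradiction, that $\Omega$ is a connected component of $M\setminus\Sigma$ with $V>0$ on $\Omega$ whose closure contains no boundary component of $M$. Since $\Sigma\subset\operatorname{int}(M)$ and $\partial M$ is a union of closed components, each component of $\partial M$ is either contained in $\overline{\Omega}$ or disjoint from it. Hence under our assumption $\overline{\Omega}\cap\partial M=\emptyset$. Then $\Omega$ is a relatively compact open subset of $\operatorname{int}(M)$ with $\partial\Omega\subset\Sigma$, so $V=0$ on $\partial\Omega$ and $V>0$ in $\Omega$.

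The key step is the trace equation \eqref{eq:trace-system}. Inside $\Omega$ it reads
$$\Delta_g V=-\frac{R_g}{n-1}V-\frac{\alpha n}{n-1}.$$
By Proposition~\ref{propertiees}(a), $R_g$ is constant. With $R_g\le 0$, $\alpha\le 0$ and $V>0$, the right-hand side is $\ge 0$, so $V$ is subharmonic in $\Omega$. The maximum principle on the compact set $\overline{\Omega}$ then gives
$$\sup_{\Omega}V\le\max_{\partial\Omega}V=0,$$
which contradicts $V>0$ in $\Omega$. An equivalent integral version: integrate $\Delta_g V$ over $\Omega$ to get
$$\int_{\Omega}\Delta_g V\,dv=\int_{\partial\Omega}\frac{\partial V}{\partial\xi}\,ds.$$
Here $\xi$ is the outward normal, and since $V>0$ inside and $V=0$ on $\partial\Omega$, we have $\partial V/\partial\xi\le 0$. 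The left-hand side is $\ge 0$, so both sides vanish. Therefore $\Delta_g V\equiv0$, and by the strong maximum principle $V$ is constant, hence $V\equiv 0$, again a contradiction.

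The main obstacle is regularity of $\partial\Omega$: it may contain critical points of $V$. The pointwise weak maximum principle avoids this, since it only needs $V\in C^2(\Omega)\cap C^0(\overline{\Omega})$, which holds because $V$ is smooth on $M$. So I would use the maximum-principle formulation rather than the divergence argument. I would also check the boundary-component claim carefully. Each component $C$ of $\partial M$ is connected and disjoint from $\Sigma$, so $V$ has a constant sign on $C$. Consequently $C$ lies in a single component of $M\setminus\Sigma$, which is what justifies the dichotomy used above.
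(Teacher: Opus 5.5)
Your proof is correct and follows the same route as the paper. Both arguments suppose a component $\Omega$ with $V>0$ and $\overline{\Omega}\cap\partial M=\emptyset$, read off from the traced equation that $V$ is subharmonic on $\Omega$, and then use the weak maximum principle with $V=0$ on $\partial\Omega\subset\Sigma$ to contradict $V>0$. Your version is slightly more complete in two places: the single inequality $\Delta_g V\ge 0$ handles $R_g<0$ and $R_g=0$ together (the paper treats $R_g<0$ and calls the other case analogous), and you justify the reduction to $\overline{\Omega}\cap\partial M=\emptyset$, which the paper takes for granted.
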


\begin{proof}
Consider first the case \(R_g < 0\); the case \(R_g = 0\) is analogous. 
Suppose, for contradiction, that there exists a connected component \(\Omega\) of \(M \setminus \Sigma\) with \(V > 0\) in \(\Omega\) and \(\overline{\Omega} \cap \partial M = \emptyset\). 
From the traced static equation \eqref{eq:trace-system} we have
$$
\Delta_g V = -\frac{R_g}{n-1} V - \frac{\alpha n}{n-1}.
$$
Since \(R_g < 0\) and \(\alpha \leq 0\), it follows that
$$
\Delta_g V \geq -\frac{R_g}{n-1} V > 0 \quad \text{in } \Omega,
$$
so \(V\) is subharmonic in \(\Omega\).

By the weak maximum principle, a nonconstant subharmonic function attains its maximum only on the boundary \(\partial\Omega\). 
But on \(\partial\Omega\) we have \(V = 0\), whereas \(V > 0\) in \(\Omega\). 
Hence \(V\) would achieve a positive interior maximum, contradicting the maximum principle. 
Therefore, such a component \(\Omega\) cannot exist, and consequently every component where \(V > 0\) must intersect \(\partial M\).
\end{proof}

\begin{corollary}\label{cor:boundary-components-bound}
Let $(M^n, g)$, $n \geq 3$, be an $n$-dimensional connected compact oriented manifold with boundary. Let $V$ be a function on $M$ satisfying \eqref{eq:main-system} with $R_g \leq 0$ and $\alpha\leq 0$.
Assume that \(\Sigma = V^{-1}(0) \subset \operatorname{int}(M)\). 
Then the number of connected components of \(M \setminus \Sigma\) on which \(V > 0\) does not exceed the number of connected components of \(\partial M\).
\end{corollary}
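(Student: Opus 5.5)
The bound will follow from Lemma~\ref{lem:component-contains-boundary} by setting up an injective assignment from positive components of $M\setminus\Sigma$ to components of $\partial M$.

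Let $\Omega_1,\dots,\Omega_m$ be the distinct connected components of $M\setminus\Sigma$ on which $V>0$. By Lemma~\ref{lem:component-contains-boundary}, each closure $\overline{\Omega_i}$ meets $\partial M$. We need the stronger statement that it contains a whole boundary component. Since $\Sigma\subset\operatorname{int}(M)$, $V$ has no zeros on $\partial M$. So on each connected component $\partial_j M$ of $\partial M$, $V$ has a constant nonzero sign.

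Fix a component $\partial_j M$ on which $V>0$. The set $\partial_j M$ is connected and disjoint from $\Sigma$, so it lies entirely inside a single connected component of $M\setminus\Sigma$. This works because a connected subset of $M\setminus\Sigma$ sits in exactly one of its components. Hence, whenever $\overline{\Omega_i}\cap\partial_jM\neq\emptyset$ for a positive component, the point of intersection lies in $\Omega_i$ itself, since it is not in $\Sigma$. We then get $\partial_j M\subset\Omega_i$.

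Define the map $\Omega_i\mapsto$ (some boundary component $\partial_jM$ contained in $\Omega_i$), which is well defined by the previous step. It is injective, because distinct components $\Omega_i\neq\Omega_{i'}$ are disjoint and so cannot contain the same $\partial_jM$. Therefore $m$ is at most the number of components of $\partial M$; in fact $m$ is at most the number of components on which $V>0$.

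The only delicate point is the passage from ``$\overline{\Omega_i}$ meets $\partial M$'' (which is what the maximum-principle argument in the lemma actually yields) to ``$\Omega_i$ contains a full boundary component''. This is handled by the observation that $\partial M\cap\Sigma=\emptyset$, so boundary points are never in $\partial\Omega_i\cap\Sigma$. The rest is the elementary connectedness argument above.
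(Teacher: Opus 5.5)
Your proof is correct and is essentially the paper's argument: the paper also derives the bound from Lemma~\ref{lem:component-contains-boundary} by a pigeonhole count, and your injective assignment of a boundary component to each positive component is exactly that count. Your added observation is that each component of $\partial M$ is connected and disjoint from $\Sigma$, so it lies in a single component of $M\setminus\Sigma$. This makes explicit a step the paper leaves implicit, and it reconciles the lemma's proof, which only shows $\overline{\Omega}\cap\partial M\neq\emptyset$, with the lemma's statement that $\Omega$ contains a whole boundary component.
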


\begin{proof}
If the number of positive-level components were greater than the number of boundary components, then by the pigeonhole principle at least one such component would contain no boundary component, contradicting Lemma~\ref{lem:component-contains-boundary}.
\end{proof}

\begin{corollary}\label{cor:cylinder-sigma-connected}
Let $(M^n, g)$, $n \geq 3$, be an $n$-dimensional connected compact oriented manifold with boundary. Let $V$ be a function on $M$ satisfying \eqref{eq:main-system} with $R_g \leq 0$ and $\alpha\leq 0$. Suppose that \(M\) is diffeomorphic to a cylinder, i.e., \(\partial M\) has exactly two connected components. 
If \(\Sigma = V^{-1}(0) \subset \operatorname{int}(M)\), then \(\Sigma\) is connected.
\end{corollary}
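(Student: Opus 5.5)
Since $\partial M$ has exactly two connected components, say $\partial_1 M$ and $\partial_2 M$, Corollary~\ref{cor:boundary-components-bound} shows that $M\setminus\Sigma$ has at most two components on which $V>0$. Applying the same corollary to $-V$ bounds the components on which $V<0$. The triple $(-V,-\alpha,-\tau)$ again solves \eqref{eq:main-system}, but with $-\alpha\ge 0$, so Lemma~\ref{lem:component-contains-boundary} does not apply to it directly. I would therefore argue as follows. On a negative component $\Omega^-$ with $\overline{\Omega^-}\cap\partial M=\emptyset$, we have $\Delta V = -\frac{R_g}{n-1}V-\frac{\alpha n}{n-1}$ with $R_g\le 0$ and $V<0$. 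The first term is then $\le 0$ and the second is $\ge 0$, so no sign can be read off. I would instead look at the minimum of $V$ on $\Omega^-$, where $\nabla^2 V\ge 0$, and combine this with \eqref{eq:main-system} to extract a contradiction. I expect this to be delicate, so the plan mostly avoids needing it.

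The main plan is topological. Every component of $\Sigma$ is a closed embedded hypersurface in $\operatorname{int}(M)$ (Proposition~\ref{propertiees}(h)), and it is two-sided because $\nabla V\neq 0$ on it (item (f)). Suppose $\Sigma$ has at least two components $\Sigma_1,\Sigma_2$. Each $\Sigma_i$ separates $M$: the sign of $V$ changes across $\Sigma_i$, since $\nabla V\ne0$ there. More precisely, if $M\setminus\Sigma_i$ were connected, a loop crossing $\Sigma_i$ exactly once and meeting no other component of $\Sigma$ would force $V$ to change sign an odd number of times along a closed path, which is impossible. So each $\Sigma_i$ separates $M$, and $M$ is a cylinder with two boundary components. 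Going from $\partial_1M$ to $\partial_2M$, the level sets $\Sigma_1,\dots,\Sigma_m$ then appear in a linear chain order, and the regions between consecutive ones alternate in the sign of $V$.

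With $m\ge2$ such hypersurfaces there are $m+1$ regions, and the $m-1$ middle regions touch no boundary component. If $m\ge 3$, some middle region has $V>0$, which contradicts Lemma~\ref{lem:component-contains-boundary}. If $m=2$, the middle region is positive, again a contradiction, unless it is negative. In that case the two end regions, each containing a boundary component, are positive, and the middle region is a negative region separated from $\partial M$. This is exactly the case that needs the extra argument from the first paragraph: rule out an interior negative region bounded by $V=0$ under $R_g\le0$, $\alpha\le0$.

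For that case I would first try integrating the traced equation over $\Omega^-$. This gives $\int_{\partial\Omega^-}\partial_\nu V = -\frac{R_g}{n-1}\int V-\frac{\alpha n}{n-1}|\Omega^-|$. The left side is positive, since $V$ increases outward to $0$, while the right side is $\ge0$, so there is no contradiction yet. I would therefore fall back on the umbilicity data $H_\Sigma=\alpha/|\nabla V|$ from item (g) together with a Reilly-type or maximum-principle argument. This last step is the main obstacle.
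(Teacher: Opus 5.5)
You identified the right danger: two positive end regions separated by a negative region that misses $\partial M$. But your proposal leaves this case open, and it cannot be closed, because for $\alpha<0$ the statement is false. The step you planned, ruling out a negative region bounded by $\{V=0\}$ and disjoint from $\partial M$, already fails in flat space. On $M=\{1\le|x|\le4\}\subset\mathbb{R}^n$, the function $V=|x|^2+\langle b,x\rangle+4$ with $|b|=9/2$ solves \eqref{eq:main-system} with $R_g=0$, $\alpha=-2(n-1)$, $\tau=3$: indeed $\nabla^2V=2g$, and $\partial_\nu V\,g-V\Pi=3g$ on both spheres. Here $V<0$ exactly on the ball of radius $\sqrt{17}/4$ about $-b/2$, which lies in $\operatorname{int}M$. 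The same example shows that your ``linear chain'' ordering is unjustified. A component of $\Sigma$ may bound a region containing no boundary component, so the regions of $M\setminus\Sigma$ form a tree, and your cases $m\ge3$ and ``middle region positive'' are not contradictions either. Also, for $\alpha\ne0$, item (f) does not give $\nabla V\neq0$ on $\Sigma$: at a critical zero $\nabla^2V=-\frac{\alpha}{n-1}g$, so $\Sigma$ can contain isolated points (e.g.\ $V=|x-p|^2$).

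The decisive counterexample is the doubled Schwarzschild neck.
\begin{itemize}
\item Let $g=dt^2+f(t)^2g_{\mathbb{S}^{n-1}}$ on $\mathbb{R}\times\mathbb{S}^{n-1}$, where $f''=(n-2)mf^{1-n}$, $f'(0)=0$, $f(0)=(2m)^{1/(n-2)}$, $m>0$. Then $f$ is even, $f'^2=1-2mf^{2-n}<1$, and $R_g=0$.
\item For $V=V(t)$, the interior equation of \eqref{eq:main-system} is equivalent to $E:=Vf''-V'f'-\frac{\alpha f}{n-1}=0$ (the $tt$-component) together with the traceless equation $V''-\frac{f'}{f}V'+n(n-2)mf^{-n}V=0$. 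The traceless equation implies $(E/f)'=0$.
\item Fix $\alpha<0$ and let $V$ solve the traceless equation with $V(0)=\frac{\alpha f(0)}{(n-1)f''(0)}<0$ and $V'(0)=0$. Then $E\equiv0$ and $V$ is even.
\item For $t\neq0$, $(V/f')'=-\frac{\alpha f}{(n-1)f'^2}\ge-\frac{\alpha f(0)}{n-1}>0$. Hence $V$ vanishes exactly at $t=\pm t_0$, is negative on $(-t_0,t_0)$, and is positive for $|t|>t_0$.
\item On $M=[-T,T]\times\mathbb{S}^{n-1}$ with $T>t_0$, both boundary spheres have $\Pi=\frac{f'(T)}{f(T)}g$. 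By evenness the boundary condition holds with $\tau=V'(T)-V(T)\frac{f'(T)}{f(T)}$.
\end{itemize}
So $M$ is a cylinder with $R_g=0$ and $\alpha<0$, and $\Sigma=\{\pm t_0\}\times\mathbb{S}^{n-1}\subset\operatorname{int}M$ is disconnected.

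The paper's proof has the same hole. It asserts that three components of $M\setminus\Sigma$ force a \emph{positive} one missing $\partial M$, whereas Lemma~\ref{lem:component-contains-boundary} controls only positive components.

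What survives is the case $\alpha=0$. Then $-V$ satisfies the same hypotheses, so every component of $M\setminus\Sigma$ contains a boundary component, and $\nabla V\neq0$ on $\Sigma$ by ODE uniqueness along geodesics. For $M\cong\mathbb{S}^{n-1}\times[0,1]$, where every closed hypersurface separates, there are then at most two regions and $\Sigma$ is connected.
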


\begin{proof}
Assume, to the contrary, that \(\Sigma\) is disconnected. 
Then \(M \setminus \Sigma\) would have at least three connected components. 
Since \(\partial M\) has only two components, Corollary~\ref{cor:boundary-components-bound} would imply that at least one component of \(M \setminus \Sigma\) with \(V > 0\) contains no boundary component, contradicting Lemma~\ref{lem:component-contains-boundary}. Hence \(\Sigma\) must be connected.
\end{proof}

\section*{Data availability statement}
This manuscript has no associated data.

\section*{Conflict of interest statement}
On behalf of all authors, the corresponding author states that there is no conflict of interest.

\section*{Acknowledgments}
M. Andrade was partially supported by the Brazilian National Council for Scientific and Technological Development (CNPq, grants 408834/2023-4, 403869/2024-2, and 400078/2025-2) and FAPITEC/SE/Brazil (grant 019203.01303/2024-1). She also extends her gratitude to the Department of Mathematics at Princeton University, where this work was completed during her visit as a Visiting Fellow, for its warm hospitality. She is especially thankful to Ana Menezes for her continuous encouragement and support.

\end{document}